\theoremstyle{definition}
\newtheorem{thm}{Theorem}[section]
\newtheorem{lem}[thm]{Lemma}
\newtheorem{cor}[thm]{Corollary}
\newtheorem{prop}[thm]{Proposition}
\newtheorem{conj}[thm]{Conjecture}
\theoremstyle{definition}
\newtheorem{rem}[thm]{Remark}
\newtheorem{defn}[thm]{Definition}
\numberwithin{equation}{section}
\def\A{{\mathbb A}}
\def\F{{\mathbb F}}
\def\Q{{\mathbb Q}}
\def\R{{\mathbb R}}
\def\Z{{\mathbb Z}}
\def\C{{\mathbb C}}
\def\Ker{\mathop{\mathrm{Ker}}\nolimits}
\def\Pic{\mathop{\mathrm{Pic}}\nolimits}
\def\GL{\mathop{\mathrm{GL}}\nolimits}
\def\SL{\mathop{\mathrm{SL}}}
\def\SU{\mathop{\mathrm{SU}}\nolimits}
\def\Her{\mathop{\mathrm{Her}}\nolimits}
\def\Tr{\mathop{\mathrm{Tr}}\nolimits}
\def\det{\mathop{\mathrm{det}}\nolimits}
\def\dim{\mathop{\mathrm{dim}}\nolimits}
\def\div{\mathop{\mathrm{div}}\nolimits}
\def\Ind{\mathop{\mathrm{Ind}}\nolimits}
\def\L{\mathscr{L}}
\def\H{\mathscr{H}}
\def\M{\mathscr{M}}
\def\F{\mathscr{F}}
\def\W{\mathscr{W}}
\def\OO{\mathscr{O}}
\def\Res{\mathop{\mathrm{Res}}\nolimits}
\def\GSpin{\mathop{\mathrm{GSpin}}\nolimits}
\def\CH{\mathop{\mathrm{CH}}\nolimits}
\def\exp{\mathop{\mathrm{exp}}\nolimits}
\def\Sp{\mathop{\mathrm{Sp}}\nolimits}
\def\SO{\mathop{\mathrm{SO}}\nolimits}
\def\SU{\mathop{\mathrm{SU}}}
\def\Mp{\mathop{\mathrm{Mp}}\nolimits}
\def\Supp{\mathop{\mathrm{Supp}}\nolimits}
\def\U{\mathrm{U}}
\def\S{\textbf{\textsl{S}}}
\newcommand{\transp}[1]{{}^{t}\!{#1}}
\newcommand{\defeq}{\vcentcolon=}
\begin{document}

\title[modularity on unitary Shimura varieties]
{Modularity of special cycles on unitary Shimura varieties over CM-fields}

\author{Yota Maeda}
\address{Department of Mathematics, Faculty of Science, Kyoto University, Kyoto 606-8502, Japan}
\email{y.maeda@math.kyoto-u.ac.jp}

%\date{\today}
\date{October 3, 2019}

\subjclass[2010]{Primary 11G18; Secondary 11F46, 14C17}
\keywords{Shimura varieties, modular forms, algebraic cycles}

% 11F03 Modular and automorphic functions
% 11F70 Representation-theoretic methods; automorphic representations over local and global fields
% 11F80 Galois representations
% 11R39 Langlands-Weil conjectures, nonabelian class field theory
% 14G25 Global ground fields
% 14F30 $p$-adic cohomology, crystalline cohomology

\date{\today}

\maketitle

\begin{abstract}
  We study the modularity of the generating series of special cycles on unitary Shimura varieties over CM-fields  of degree $2d$ associated with a Hermitian form in $n+1$ variables whose signature is $(n,1)$ at $e$ real places and $(n+1,0)$ at the remaining $d-e$ real places for $1\leq e <d$.
  For $e=1$, Liu proved the modularity and Xia showed the absolute convergence of the generating series.
  On the other hand, Bruinier constructed regularized theta lifts on orthogonal groups over totally real fields and proved the modularity of special divisors on orthogonal Shimura varieties.
  By using Bruinier's result, we work on the problem for $e=1$ and give another proof of Liu's one.
  For $e>1$, we prove that the generating series of special cycles of codimension $er$ in the Chow group is a Hermitian modular form of weight $n+1$ and genus $r$,  assuming the Beilinson-Bloch conjecture with respect to orthogonal Shimura varieties.
  Our result is a generalization of $\textit{Kudla's modularity conjecture}$, solved by Liu unconditionally when $e=1$.
\end{abstract}

\section{Introduction}
Hirzebruch-Zagier \cite{HZ} observed that the intersection number of special divisors on Hilbert modular surfaces generates a certain weight 2 elliptic modular form.
Kudla-Millson generalized this work in \cite{KM} and they proved that special cycles on orthogonal (resp. unitary) Shimura varieties generate Siegel (resp. Hermitian) modular forms with coefficients in the cohomology group.
Recently, Yuan-Zhang-Zhang \cite{YZZ} treats this problem in the Chow group in the case of orthogonal Shimura varieties and proved the modularity.
Kudla \cite{Remarks} and the author \cite{Yota} generalized this problem for a certain orthogonal Shimura variety under the Beilinson-Bloch conjecture.

In this paper, we shall work on unitary case in the Chow group.
Our problem is Conjecture \ref{Conjecture}.
We give two solutions to this problem (Corollary \ref{MainCor1} and Theorem \ref{MainTheorem3}).
First, we prove Conjecture \ref{Conjecture} for $e=1$  unconditionally by using Bruinier's result \cite{Bruinier}.
On the other hand, for $e=1$, Liu \cite{Liu} solved Conjecture \ref{Conjecture},\ i.e. proved the modularity of special cycles on unitary Shimura varieties in the Chow group, assuming the absolute convergence of the generating series.
Recently, Xia showed the modularity and absolute convergence of the generating series for $e=1$. 
Our result in this paper gives another proof of Liu's result.
For $e=1$ and $r=1$, the modularity of special divisors are proved in Theorem \ref{MainTheorem2}.
To treat higher codimensional cycles, we adopt the induction method \cite{YZZ}.
Second, for $e>1$, we show the Conjecture \ref{Conjecture} under the Beilinson-Bloch conjecture with respect to orthogonal Shimura varieties.
we reduce the problem to the orthogonal case (\cite{Remarks} and \cite{Yota}) so that we also need the Beilinson-Bloch conjecture with respect to orthogonal Shimura varieties.
We remark that we do not prove the absolute convergence of the generating series in this paper.

Before giving the statement of our results, we shall define some Shimura varieties.
\subsection{Unitary Shimura varieties}
\label{Subsection:Unitary Shimura varieties}
Let $d$, $e$ and $n$ be positive integers such that $e<d$. %($d\neq 1$?)
Let $F$ be a totally real field of degree $d$ with real embeddings $\sigma_1,\dots,\sigma_d$ and $E$ be a CM extension of $F$.
We write $\partial_F$ for the different ideal of $F$.
Let $(V_E,\langle\ ,\ \rangle)$ be a non-degenerate Hermitian space of dimension $n+1$ over $E$ whose signature is $(n,1)$ at $\sigma_1,\dots,\sigma_e$ and $(n+1,0)$ at $\sigma_{e+1},\dots,\sigma_d$.

For $i=1,\dots,e$, let  $V_{E,\sigma_i,\C}\defeq V_E\otimes_{F,\sigma_i}\C$. %and $\mathbb{P}(V_{E,\sigma_i,\C})\defeq(V_{E,\sigma_i,\C}\backslash \{0\})/\C^{\times}$.
and $D^E_i \subset \mathbb{P}(V_{E,\sigma_i,\C})$ be the Hermitian symmetric domain defined as follows:
\[D^E_i\defeq\{v \in V_{E,\sigma_i,\C}\backslash \{0\} \mid \langle v,v \rangle >0\}/\C^\times .\]
We put
\[D_E\defeq D^E_1\times\dots\times D^E_e.\]
Let $\U(V_E)$ be the unitary group of $V_E$ over $F$, which is a reductive group over $F$.
We put $G\defeq\Res_{F/\Q}\U(V_E)$ and consider the Shimura varieties associated with the Shimura datum $(G,D_E)$.
Then, for any open compact subgroup $K_f^G\subset G(\A_f)$, the Shimura datum $(G,D_E)$ gives a Shimura variety $M_{K_f^G}$ over $\C$, whose $\C$-valued points are given as follows:
\[M_{K_f^G}(\C)=G(\Q) \backslash (D_E \times G(\A_f))/K_f^G.\]
Here $\A_f$ is the ring of finite ad\`{e}les of $\Q$.
We remark that $M_{K_f^G}$ has a canonical model over a number field called the reflex field.
Hence $M_{K_f^G}$ is canonically defined over $\overline{\Q}$.
$\overline{\Q}$ denotes an algebraic closure of $\Q$ embedded in $\C$.
By abuse of notation, in this paper, the canonical model of $M_{K_f^G}$ over $\overline{\Q}$ is also denoted by the same symbol $M_{K_f^G}$.
Then the Shimura variety $M_{K_f^G}$ is a projective variety over $\overline{\Q}$ since $0<d-e$.
It is a smooth variety over $\overline{\Q}$ if $K_f^G$ is sufficiently small.
In this paper, we assume that $K_f^G$ is sufficiently small.

\subsection{Orthogonal Shimura varieties}
\label{Subsection:Orthogonal Shimura varieties}

We define $V_F\defeq V_E$ considered as $E$-vector space and $(\ ,\ )\defeq \Tr_{E/F}\langle\ ,\ \rangle$.
Then $(V_F,(\ ,\ ))$ is a quadratic space of dimension $2n+2$ over $F$ whose signature is $(2n,2)$ at $\sigma_1,\dots,\sigma_e$ and $(2n+2,0)$ at $\sigma_{e+1},\dots,\sigma_d$.
We define $D_F$ similarly.
We put $H\defeq \Res_{F/\Q}\GSpin(V_F)$ and define $N_{K_f^H}$ similarly for an open compact subgroup $K_f^H\subset H(\A_f)$.
Let $L\subset V_F$ be a lattice and $L'$ denotes the dual lattice.
Now we get a group embedding, $G\hookrightarrow H$.
From here, we assume $K_f^G=H(\A_f)\cap K_f^H$ so that
\begin{eqnarray}
  \label{embedding}
\iota\colon M_{K_f^G}\hookrightarrow N_{K_f^H}.
\end{eqnarray}
In this paper, we also assume $K_f^H$ is sufficiently small.
\begin{comment}
There exists a line bundle $\F_k$ whose sections are weight $k$ modular forms on $N_{K_f}^H$.
We remark that $\F_1=H(\Q)\backslash \L\times H(\A_f)/K_f^H$.
\end{comment}

\subsection{Special cycles on Shimura varieties}
\label{Subsection:Special cycles on Shimura varieties}
We shall define the special cycles on unitary Shimura varieties.
For $i=1,\dots,e$, let $\L_i\in \Pic(D_i^E)$ be the line bundle which is the restriction of $\mathscr{O}_{\mathbb{P}(V_{E_i,\sigma_i,\C})}(-1)$ to $D_i^E$.
By pulling back to $D_E$, we get $p^*_i\L_i\in \Pic(D_E)$,
where $p_i\colon D_E \to D_i^E$ are the projection maps.
These line bundles descend to
$\L_{K_f,i} \in \Pic(M_{K_f^G})\otimes_{\Z}\Q$ and thus we obtain
$\L\defeq\L_{K_f^G,1}\otimes\dots\otimes\L_{K_f^G,e}$ on $M_{K_f^G}$.

We shall define special cycles following Kudla \cite{OrthogonalKudla}, \cite{Remarks}.
Let $W\subset V_E$ be a totally positive subspace over $E$.
We denote $G_W\defeq\Res_{F/\Q}\U(W^\perp)$.
Let $D_{W,E}\defeq D_{W,1}^E\times\dots\times D_{W,e}^E$ be the Hermitian
symmetric domain associated with $G_W$, where
\[D_{W,i}^E\defeq\{w \in D_i^E \mid \forall v \in W_{\sigma_i}, \ \langle v,w \rangle=0\} \quad (1\leq i\leq e).\]
Then we have an embedding of Shimura data $(G_W,D_{W,E})\hookrightarrow (G,D_E)$.
For any open compact subgroup $K_f^G \subset G(\A_f)$ and $g \in G(\A_f)$,
we have an associated Shimura variety $M_{gK_f^Gg^{-1},W}$ over $\C$:
\[M_{gK_f^Gg^{-1},W}(\C)=G_W(\Q) \backslash (D_{W,E} \times G_W(\A_f))/(gK_f^Gg^{-1} \cap G_W(\A_f)).\]
Assume that $K_f^G$ is neat so that the following morphism
\begin{align*}
M_{gK_f^Gg^{-1},W}(\C) &\to M_{K_f^G}(\C)\\
[\tau,h]&\mapsto [\tau,hg]
\end{align*}
is a closed embedding \cite[Lemma 4.3]{Remarks}.
Let $Z^G(W,g)_{K_f^G}$ be the image of this morphism.
We consider $Z^G(W,g)_{K_f^G}$ as an algebraic cycle of codimension $e\dim_FW$ on $M_{K_f^G}$ defined over $\overline{\Q}$.

For any positive integer $r$ and $x=(x_1,\dots,x_r) \in V_E^r$, let $U(x)$ be the $E$-subspace of $V_E$ spanned by $x_1,\dots,x_r$.
We define the \textit{special cycle} in the Chow group
\[Z^G(x,g)_{K_f^G} \in \CH^{er}(M_{K_f^G})_{\C}\defeq\CH^{er}(M_{K_f^G})\otimes_{\Z}\C\]
by
\[Z^G(x,g)_{K_f^G}\defeq Z^G(U(x),g)_{K_f^G}(\mathrm{c}_1(\L^{\vee}_{K_f^G,1})\cdots\mathrm{c}_1(\L^{\vee}_{K_f^G,e}))^{r-\dim U(x)}\]
if $U(x)$ is totally positive.
Otherwise, we put $Z^G(x,g)_{K_f^G}\defeq 0$.

For a Bruhat-Schwartz function $\phi_f\in \S(V_E(\A_f)^r)^{K_f^G}$, \textit{Kudla's generating function} is defined to be the following formal power series with coefficients in $\CH^{er}(M_{K_f^G})_{\C}$ in the variable $\tau=(\tau_1,\dots,\tau_d)\in (\H_r)^d$:
\[Z_{\phi_f}^G(\tau)\defeq\sum_{x\in G(\Q)\backslash V_E^r}\sum_{g\in G_x(\A_f)\backslash G(\A_f)/K_f^G}\phi_f(g^{-1}x)Z^G(x,g)_{K_f^G}q^{T(x)}.\]
Here $G_x\subset G$ is the stabilizer of $x$, $\H_r$ is the Siegel upper half plane of genus $r$, $T(x)$ is the moment matrix $\frac{1}{2}((x_i,x_j))_{i,j}$, and
\[q^{T(x)}\defeq\exp(2\pi \sqrt{-1} \sum_{i=1}^d \Tr{\tau_i\sigma_iT(x)}).\]

For a $\C$-linear map $\ell\colon\CH^{er}(M_{K_f^G})_{\C}\to \C$, we put
\[\ell(Z_{\phi_f}^G)(\tau)\defeq\sum_{x\in G(\Q)\backslash V_E^r}\sum_{g\in G_x(\A_f)\backslash G(\A_f)/K_f^G}\phi_f(g^{-1}x)\ell(Z^G(x,g)_{K_f^G})q^{T(x)},\]
which is a formal power series with complex coefficients in the variable $\tau\in(\H_r)^d$.
We define $Z_{\phi_f}^H(\tau)$ similarly.

\begin{rem}
  We explain $Z^H_{\phi_f}(\tau)$ is an analogy of a theta function.
  For a totally real definite matrix $\beta\in M_r(F)$, let $\Omega_{\beta}\defeq\{x\in V_F^r\mid T(x)=\beta\}$ and we consider the Fourier expansion with respect to $\beta$.
  Now we choose $\beta$ such that $\Omega_{\beta}\neq \varnothing$ and fix $x_0\in\Omega_{\beta}(\A_f)$.
  For $\xi_j\in H(\A_f)$,
  \[\Supp(\phi_f)\cap\Omega_{\beta}(\A_f)=\coprod_{j=1}^{\ell}K_f^H\cdot\xi_j\cdot x_0,\]
  and we put
  \[Z^H(\beta,\phi_f)_{K_f^H}\defeq\sum_{j=1}^{\ell}\phi_f(\xi_j^{-1}\cdot x_0)Z^H(x_0,\xi_j)_{K_f^H}.\]
  Then $Z_{\phi_f}^H(\tau)$ becomes
  \[Z^H_{\phi_f}(\tau)=\sum_{\beta\geq0}Z^H(\beta,\phi_f)_{K_f^H}q^{\beta}\]
  and by adding Kudla-Millson forms and Gaussian functions, this is exactly a theta function in the cohomology group.
  For details, see \cite{OrthogonalKudla}.
\end{rem}

Before stating our goal, we have to clarify the notion of ``modular".

\begin{defn}
\label{def:modular}
  Let $V$ be a vector space over $\C$ and $f$ be a formal power series with coefficients in $V$.
  We say $f$ is modular if for any $\C$-linear map $\ell\colon V\to\C$ such that $\ell(f)$ is absolutely convergent, $\ell(f)$ is modular.
\end{defn}

\subsection{The Beilinson-Bloch conjecture}
To state the main theorem for the $e>1$ case, we have to introduce the Beilinson-Bloch conjecture because we need the modularity for the orthogonal case and this was proved under the Beilinson-Bloch conjecture in \cite{Remarks} or \cite{Yota}.

Let $X$ be a smooth variety over $\overline{\Q}$ and 
\[cl^m:\CH^m(X)\to H^{2m}(X,\Q)\defeq H^{2m}(X(\C),\Q)\]
be the $m$-th cycle map.
On the other hand, we have the $m$-th intermediate Jacobian $J^{2m-1}(X)$ of $X$ defined by the Hodge structure of $X$ (see before \cite[Conjecture 1.2]{Yota}).
Then there exists the $m$-th higher Abel-Jacobi map:
\[AJ^m:\Ker(cl^m)\otimes\Q\to J^{2m-1}(X)\otimes\Q.\]
The Beilinson-Bloch conjecture claims $AJ^m$ is injective.
Hence if $H^{2m-1}(X,\Q)=0$, then under the Beilinson-Bloch conjecture for $m$, the map
\[cl^m_{\Q}:\CH^m(X)\otimes\Q\to H^{2m}(X,\Q)\defeq H^{2m}(X(\C),\Q)\]
is injective.
See \cite{Yota} for the detailed claim of the Beilinson-Bloch conjecture.

\subsection{Main results}
\label{Subsection:Main results}
For notations, see subsections \ref{Subsection:Orthogonal Shimura varieties} and \ref{Subsection:Special cycles on Shimura varieties}.
In the context of \textit{Kudla's modularity conjecture}, our problem is as follows.
\begin{conj}
\label{Conjecture}
The generating series $Z_{\phi_f}^G(\tau)$ is a Hermitian modular form of weight $n+1$ and genus $r$.
\end{conj}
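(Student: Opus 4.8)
The plan is to reduce Conjecture \ref{Conjecture} to the orthogonal case at genus one along the embedding \eqref{embedding} $\iota\colon M_{K_f^G}\hookrightarrow N_{K_f^H}$, where it is supplied by the modularity of the orthogonal generating series $Z^H_{\phi_f}$, and then to propagate modularity in the genus by an induction modeled on \cite{YZZ} but carried out directly for the Hermitian group $\U(r,r)$. The organizing principle is that $V_E$ equipped with the trace form $(\ ,\ )=\Tr_{E/F}\langle\ ,\ \rangle$ is exactly the quadratic space $V_F$, so that $G=\Res_{F/\Q}\U(V_E)$ sits inside $H=\Res_{F/\Q}\GSpin(V_F)$ and the two families of special cycles share the moment matrix $T(x)=\tfrac12((x_i,x_j))_{i,j}$.

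First I would settle the base case $r=1$. The key is a pullback identity on Chow groups: for $x\in V_E$, viewed in $V_F$, the preimage under $\iota$ of the orthogonal special cycle attached to $x$ is the unitary special cycle $Z^G(x,g)_{K_f^G}$. Indeed a negative line in $D^E_i$ spans, over $\R$, a negative two-plane of $V_F$, and requiring this plane to be orthogonal to $x$ for the trace form forces both the real and the imaginary part of $\langle x,\ \rangle$ to vanish, i.e. full Hermitian orthogonality; together with $K_f^G=H(\A_f)\cap K_f^H$ this gives $\iota^{*}Z^H(x,g)_{K_f^H}=Z^G(x,g)_{K_f^G}$. Summing over $x$ against a compatible test function then yields $\ell(Z^G_{\phi_f})=(\ell\circ\iota^{*})(Z^H_{\phi_f})$ for every $\C$-linear functional $\ell$ on $\CH^{e}(M_{K_f^G})_\C$. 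Because the genus-one Siegel and Hermitian upper half spaces coincide, the modularity of $Z^H_{\phi_f}$ transfers without loss: for $e=1$ this is Bruinier's unconditional theorem on special divisors via regularized theta lifts \cite{Bruinier} and gives Theorem \ref{MainTheorem2}, while for $e>1$ it rests on the orthogonal modularity of \cite{Remarks} and \cite{Yota}, valid under the Beilinson-Bloch conjecture.

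To reach arbitrary genus I would adapt the induction of Yuan-Zhang-Zhang \cite{YZZ} to $\U(r,r)$. Invariance of $Z^G_{\phi_f}(\tau)$ under the Siegel parabolic is visible from the shape of the $q$-expansion, so Hermitian modularity reduces to the relation imposed by the remaining Weyl generator. I would obtain this relation by restricting the genus-$r$ series to the sub-Shimura varieties $M_{gK_f^Gg^{-1},W}$ introduced above: by the functoriality of special cycles under the closed embeddings of \cite[Lemma 4.3]{Remarks}, such a restriction lowers the genus, so the Weyl relation in genus $r$ follows from the modularity already established in lower genus together with the parabolic invariance. Running this induction entirely in the unitary setting produces a Hermitian modular form of weight $n+1$ and genus $r$, giving Corollary \ref{MainCor1} for $e=1$ (recovering Liu's theorem \cite{Liu}) and Theorem \ref{MainTheorem3} for $e>1$; the only orthogonal input is at the genus-one base cases on the various sub-Shimura varieties.

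The main obstacle is the pullback formula in genus $r>1$. When $\dim_E U(x)<r$ the intersection ceases to be proper, and one must show that the excess contributions are exactly the powers of $\mathrm{c}_1(\L^{\vee}_{K_f^G,i})$ built into the definition of $Z^G(x,g)_{K_f^G}$; this forces a careful reconciliation of the $E$-span, which governs the unitary cycle, with the $F$-span, which governs the orthogonal cycle of the same tuple. A second difficulty is the Weyl step of the induction, where one must check that restriction to sub-Shimura varieties respects the full $\U(r,r)$-structure, so that Hermitian, and not merely Hilbert--Siegel, modularity is produced. Finally, since Definition \ref{def:modular} only tests functionals $\ell$ for which $\ell(Z^G_{\phi_f})$ is absolutely convergent, throughout one must match this convergence with that of $(\ell\circ\iota^{*})(Z^H_{\phi_f})$ so as to compare Fourier expansions term by term; absolute convergence itself is not addressed here.
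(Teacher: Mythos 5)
Your proposal follows essentially the same route as the paper: transfer of the genus-one series along $\iota$ via the pullback identity $\iota^{\star}Z^H_{\phi_f}=Z^G_{\phi_f}$ (which the paper cites from \cite[Corollary 3.4]{Liu} rather than reproving geometrically), Bruinier's theorem \cite{Bruinier} for $e=1$ and \cite{Remarks}, \cite{Yota} under the Beilinson--Bloch conjecture for $e>1$, followed by the \cite{YZZ}-style induction on $r$ via $P_r$-invariance and the Weyl element $w_{r,r-1}$, with the orthogonal input confined to the $w_1$-invariance at $r=1$ exactly as in Corollary \ref{restate}. Two small points of hygiene: the transfer alone yields only $\SU(1,1)$-modularity at genus one (since $\SL_2\cong\SU(1,1)\subsetneq\U(1,1)$, as the paper stresses after Theorem \ref{MainTheorem2}), so your phrase ``transfers without loss'' tacitly relies on the $P_1$-invariance you separately establish; and your flagged obstacle about the pullback formula in genus $r>1$ never actually arises in this architecture, precisely because the orthogonal comparison is invoked only at genus one and all higher-genus work stays on the unitary side.
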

We give two partial solutions to this problem in this paper.
See Corollary \ref{MainCor1} and Theorem \ref{MainTheorem3}.

\begin{comment}
\begin{thm}
\label{MainTheorem1}
Assume $e=1$.
Let $f$ be a weakly holomorphic Whittaker form of weight $k$ whose coefficients $c(m,\mu)$ are integral.
Then there exists a meromorphic modular form $\Psi_f(\tau,g)$ for $G(\Q)$ of level $K_f^G$ satisfying
\begin{enumerate}
\item The weight of $\Psi$ is $-B(f)$.

\item $\div\Psi=Z(f)$
\end{enumerate}
\end{thm}
\end{comment}
\begin{comment}
\begin{rem}
  Theorem \ref{MainTheorem1} reduces to \cite[Theorem 6.8]{Bruinier} by using an embedding (\ref{embedding}).
  Hofmann constructed Borcherds products on unitary groups on imaginary quadratic field \cite{Hofmann}.
  Theorem \ref{MainTheorem1} generalizes \cite[Theorem 8.1]{Hofmann} to CM-field.
\end{rem}
\end{comment}
First, we can prove the modularity of the  generating series of special divisors by using the regularized theta lift on orthogonal groups.

\begin{thm}
\label{MainTheorem2}
Assume $e=1$ and $r=1$.
Then $Z_{\phi_f}^G(\tau)$ is an elliptic modular form of weight $n+1$.
\end{thm}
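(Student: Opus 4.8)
The plan is to deduce the statement from Bruinier's modularity theorem \cite{Bruinier} on the orthogonal side by pulling back along the embedding $\iota\colon M_{K_f^G}\hookrightarrow N_{K_f^H}$ of (\ref{embedding}). Since $\iota$ is a closed immersion of smooth projective varieties, restriction of line bundles gives a $\C$-linear map $\iota^*\colon\CH^1(N_{K_f^H})_{\C}\to\CH^1(M_{K_f^G})_{\C}$, and I would show that it carries the orthogonal generating series coefficient-by-coefficient to the unitary one, so that $\iota^*Z_{\phi_f}^H(\tau)=Z_{\phi_f}^G(\tau)$ after a harmless rescaling of $\tau$. Granting this, the conclusion follows formally from Definition \ref{def:modular}: if $\ell\colon\CH^1(M_{K_f^G})_{\C}\to\C$ is a functional with $\ell(Z_{\phi_f}^G)$ absolutely convergent, then $\ell\circ\iota^*$ is a functional on $\CH^1(N_{K_f^H})_{\C}$, the series $(\ell\circ\iota^*)(Z_{\phi_f}^H)$ has the same (rescaled) coefficients and hence also converges, and Bruinier's theorem makes it a modular form; therefore $\ell(Z_{\phi_f}^G)$ is modular.

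The geometric heart is the per-vector identity $\iota^*Z^H(x,g)_{K_f^H}=Z^G(x,g)_{K_f^G}$ for $x\in V_E=V_F$. A point of $D_i^E$ is a complex line $\ell\subset V_{E,\sigma_i,\C}$, and real orthogonality $\ell\perp x$ with respect to $(\ ,\ )=\Tr_{E/F}\langle\ ,\ \rangle$ amounts to the two conditions $(v,x)=0$ and $(\theta v,x)=0$ for $v\in\ell$ and $\theta\in E$ a purely imaginary generator; since $\{1,\theta\}$ is an $F$-basis of $E$ and $\Tr_{E/F}$ is non-degenerate, these are jointly equivalent to the single Hermitian condition $\langle v,x\rangle=0$, which cuts out exactly $Z^G(x,g)_{K_f^G}$. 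Thus the orthogonal special divisor restricts to the unitary one, and phrasing $\iota^*$ through restriction of line bundles avoids any transversality or containment issue. For the term $x=0$ I would verify separately that the tautological bundles are compatible, namely that $\iota^*$ of the orthogonal tautological line bundle on $N_{K_f^H}$ equals $\L^{\vee}_{K_f^G,1}$, so that the constant terms match as well. The remaining combinatorial data match cleanly: the Bruhat--Schwartz functions agree because $V_E(\A_f)=V_F(\A_f)$, while the moment matrices satisfy $T_H(x)=\tfrac12\Tr_{E/F}\langle x,x\rangle=2\,T_G(x)$, a factor absorbed by rescaling $\tau$ in the $q$-expansion and invisible to modularity.

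With the pullback identity established, I would apply Bruinier's theorem to the quadratic space $V_F$, whose signature is $(2n,2)$ at $\sigma_1$ and definite elsewhere: the generating series $Z_{\phi_f}^H(\tau)$ of special divisors is modular of weight $(2n+2)/2=n+1$. Feeding this into the functional argument above shows $\ell(Z_{\phi_f}^G)$ is modular of weight $n+1$ whenever it converges, and since $r=1$ this is precisely the assertion that $Z_{\phi_f}^G(\tau)$ is an elliptic (genus-one Hermitian) modular form of weight $n+1$.

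The step I expect to be the main obstacle is the cycle-level pullback identity together with its bookkeeping: confirming $\iota^*Z^H(x,g)_{K_f^H}=Z^G(x,g)_{K_f^G}$ with multiplicity one, reconciling the two differently-organized double sums over $x$ and over the double cosets for $K_f^G=H(\A_f)\cap K_f^H$ versus $K_f^H$, checking the compatibility of tautological bundles for the $x=0$ term, and pinning down the exact normalization of $\tau$ and of the Schwartz data so that the two series agree term by term. One must also confirm that Bruinier's running hypotheses are satisfied by our $N_{K_f^H}$.
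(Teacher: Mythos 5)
Your proposal is correct and takes essentially the same route as the paper: the paper's proof also pulls back Bruinier's orthogonal modularity (Theorem \ref{orthogmodular}, via Remark \ref{rem:modularity on orthogonal groups}) along the embedding (\ref{embedding}), using the identity $\iota^{\star}Z_{\phi_f}^H(\tau)=Z_{\phi_f}^G(\tau)$, which it cites from \cite[Corollary 3.4]{Liu} rather than reproving. The per-vector restriction argument you sketch (orthogonality for $\Tr_{E/F}\langle\ ,\ \rangle$ being equivalent to the Hermitian condition, plus compatibility of tautological bundles) is precisely the content of that citation, so your plan matches the paper's proof in substance.
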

Theorem \ref{MainTheorem2} generalizes \cite[Theorem 10.1]{Hofmann}.
We can prove stronger result by induction on $r$ \cite{YZZ}.
See Corollary \ref{MainCor1}.
There exists an exceptional isomorphism $\SL_2\cong\SU(1,1)\subset\U(1,1)$, so it doesn't follow immediate from \cite{Hofmann} or Theorem \ref{MainTheorem2} that $Z_{\phi_f}^G(\tau)$ is a Hermitian modular form, i.e. 
Theorem \ref{MainTheorem2} shows only $\SU(1,1)$-modularity of $Z_{\phi_f}^G(\tau)$.
However, we can show the $\U(1,1)$-modularity of  $Z_{\phi_f}^G(\tau)$ by proving term-wise modularity.
This means that we can show the modularity of $Z_{\phi_f}^G(\tau)$ for the parabolic subgroup $P_1$ and a specific element $w_1$ defined in Section \ref{Section:Modularity}.
On the other hand, $P_1$ and $w_1$ generate $\U(1,1)$ and we already know the modularity for $w_1\in\SU(1,1)$ by Theorem \ref{MainTheorem2}, so the problem reduces to proving the modularity for $P_1$. 
For the proof of the modularity for the parabolic subgroup $P_1$, see \cite{Liu}, \cite{Yota} and \cite{YZZ}.
By combining the above modularity and induction on $r$, we can prove the modularity of special cycles of higher codimension.

\begin{cor}
\label{MainCor1}
Assume $e=1$.
Then $Z_{\phi_f}^G(\tau)$ is a Hermitian modular form of weight $n+1$ and genus $r$.
\end{cor}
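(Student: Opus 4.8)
The plan is to argue by induction on the genus $r$, following the strategy of Yuan--Zhang--Zhang \cite{YZZ} adapted to the unitary setting, with the base case supplied by Theorem \ref{MainTheorem2}. Since modularity in the sense of Definition \ref{def:modular} is tested against $\C$-linear functionals $\ell$ for which $\ell(Z_{\phi_f}^G)$ converges, throughout I fix such an $\ell$ and prove that $\ell(Z_{\phi_f}^G)(\tau)$ transforms as a Hermitian modular form of weight $n+1$ under the full Hermitian modular group $\U(r,r)$ of genus $r$; the convergence hypothesis is precisely what permits the formal manipulations of the series below.

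For the base case $r=1$, Theorem \ref{MainTheorem2} already gives that $\ell(Z_{\phi_f}^G)(\tau)$ is an elliptic modular form of weight $n+1$, but through the exceptional isomorphism $\SL_2\cong\SU(1,1)$ this only yields $\SU(1,1)$-modularity rather than the desired $\U(1,1)$-modularity. To upgrade, I prove term-wise modularity for the parabolic subgroup $P_1$ and the element $w_1$, which together generate $\U(1,1)$. Modularity under $w_1\in\SU(1,1)$ is exactly Theorem \ref{MainTheorem2}, so the only remaining point is modularity under $P_1$; this is the \emph{formal} transformation coming from the action of $P_1$ on the moment matrices $T(x)$ in the exponents $q^{T(x)}$, and it follows as in \cite{Liu}, \cite{Yota}, \cite{YZZ}.

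For the inductive step, assume the statement in genus $r-1$. The group $\U(r,r)$ is generated by its Siegel parabolic $P_r$ together with a copy of $\U(1,1)$ embedded so as to act on a single coordinate, say the last, while fixing the first $r-1$ coordinates. Modularity under $P_r$ is again the formal part, handled as in the base case through the transformation of $q^{T(x)}$ under the parabolic action. For the embedded $\U(1,1)$ I reduce to genus $1$ by restriction to special cycles: for $x=(x_1,\dots,x_r)$, the cycle $Z^G(x,g)_{K_f^G}$ is, up to the line-bundle corrections by $\mathrm{c}_1(\L^{\vee}_{K_f^G,1})$, the image under $\iota$ of a genus-$1$ cycle attached to $x_r$ on the sub-Shimura variety $M_{gK_f^Gg^{-1},W}$ cut out by $W=U(x_1,\dots,x_{r-1})$. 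Composing $\ell$ with this restriction turns the partial series in the last variable into a genus-$1$ generating series on a sub-Shimura variety, to which the base case applies, yielding modularity in that variable. Since $P_r$ and the embedded $\U(1,1)$ generate $\U(r,r)$ and modularity is stable under the group action, this completes the induction and hence the proof of Conjecture \ref{Conjecture} for $e=1$.

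The main obstacle I expect is the reduction for the embedded $\U(1,1)$: making precise, at the level of $\CH^{er}(M_{K_f^G})_{\C}$, the compatibility of the special cycles $Z^G(x,g)_{K_f^G}$ and of Kudla's generating function with pullback along the embeddings $M_{gK_f^Gg^{-1},W}\hookrightarrow M_{K_f^G}$, and checking that composing with $\ell$ produces a series to which the genus-$1$ result may be applied while preserving the relevant convergence. Concretely, this amounts to organizing the Fourier--Jacobi expansion of $\ell(Z_{\phi_f}^G)$ so that each coefficient is visibly a genus-$1$ object on a lower-dimensional Shimura variety, and it is exactly here that the geometric input of \cite{YZZ} is needed; by contrast, the modularity under the parabolics $P_r$ is purely combinatorial.
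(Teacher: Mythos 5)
Your proposal is correct and follows essentially the same route as the paper's proof of Corollary \ref{MainCor1}: the base case $r=1$ upgrades Theorem \ref{MainTheorem2}'s $\SU(1,1)$-modularity to $\U(1,1)$ via the generation by $P_1$ and $w_1$ with the $P_1$-invariance done as in \cite{Liu} and \cite{Yota}, and the induction on $r$ combines $P_r$-invariance (the formal Weil-representation computation of subsection \ref{subsection:parabolic}) with the reduction of the remaining generator to genus $1$ through the Yuan--Zhang--Zhang decomposition of $Z^G_{\phi_f}$ over sub-Shimura varieties attached to $x^{\perp}$, with Poisson summation handling the $y_2$-sum (subsection \ref{subsection:w_1}). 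Your embedded copy of $\U(1,1)$ acting on the last coordinate is just a repackaging of the paper's generator $w_{r,r-1}$ from \cite[Theorem 3.5]{Liu}, and your identified ``main obstacle'' (cycle compatibility with the embeddings, i.e.\ \cite[Lemma 2.1]{YZZ}) is exactly the geometric input the paper cites.
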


  This gives another proof of Theorem \ref{MainTheorem3} for the $e=1$ case and \cite[Theorem 3.5]{Liu}.
  This is shown unconditionally differently from Theorem \ref{MainTheorem3}.

Now we state the theorem for $e>1$.
We remember that $G\defeq\Res_{F/\Q}\U(V_E)$ is the unitary group associated with a Hermitian space $V_E$ over a CM field $E$ and for a Bruhat-Schwartz function $\phi_f\in \S(V_E(\A_f)^r)^{K_f^G}$, our generating series $Z_{\phi_f}^G(\tau)$ is defined as follows with coefficients in $\CH^{er}(M_{K_f^G})_{\C}$ in the variable $\tau=(\tau_1,\dots,\tau_d)\in (\H_r)^d$:
\[Z_{\phi_f}^G(\tau)\defeq\sum_{x\in G(\Q)\backslash V_E^r}\sum_{g\in G_x(\A_f)\backslash G(\A_f)/K_f^G}\phi_f(g^{-1}x)Z^G(x,g)_{K_f^G}q^{T(x)}.\]
Our main result in this paper is as follows.
\begin{thm}
\label{MainTheorem3}
$Z_{\phi_f}^G(\tau)$ is a Hermitian modular form of weight $n+1$ and genus $r$ under the Beilinson-Bloch conjecture for $m=e$ with respect to orthogonal Shimura varieties.
\end{thm}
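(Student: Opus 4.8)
The plan is to mimic, for general $e$, the three-step architecture that proves Corollary \ref{MainCor1} in the case $e=1$, replacing its analytic input --- Bruinier's regularized theta lift, which is available only for $e=1$ --- by a geometric reduction to the orthogonal Shimura variety $N_{K_f^H}$ along the embedding $\iota\colon M_{K_f^G}\hookrightarrow N_{K_f^H}$ of (\ref{embedding}). The three steps are: (i) the base case $r=1$ of special divisors; (ii) the promotion of $\SU$-modularity to full $\U$-modularity by a term-wise argument for the Siegel parabolic; and (iii) the passage to arbitrary genus $r$ by the induction of \cite{YZZ}. The Beilinson--Bloch conjecture enters only in step (i), and only in codimension $e$, which is why the hypothesis is $m=e$ rather than $m=er$.

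For step (i) I would fix $r=1$ and compare the unitary and orthogonal special divisors under $\iota$. Since $V_F=V_E$ as $F$-vector spaces, a vector $x\in V_E$ is also a vector in $V_F$, and for $w\in D_E$ the Hermitian vanishing $\langle x,w\rangle=0$ cutting out the unitary special divisor is equivalent to the $(\ ,\ )$-orthogonality of $w$ to the whole $E$-line $Ex$, that is, to $(x,w)=0$ together with $(x,iw)=0$. For $x$ with $Ex$ nondegenerate this is a single holomorphic condition on $D_E$, so the orthogonal special divisor restricts along $\iota$ to the unitary special divisor with no excess, while the degenerate terms are matched on both sides by the Chern-class factors $\mathrm{c}_1(\L_{K_f^G,i})$ built into the definition of $Z^G(x,g)_{K_f^G}$. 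Summing against $\phi_f$ therefore gives an equality of generating series
\[
Z^G_{\phi_f}(\tau)=\iota^*\bigl(Z^H_{\phi_f^H}\bigr)(\tau),
\]
where $\phi_f^H\in\S(V_F(\A_f))$ is the Bruhat--Schwartz function associated with $\phi_f$ and the two genus-one variables agree place by place.

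By \cite{Remarks} and \cite{Yota}, the orthogonal divisor series $Z^H_{\phi_f^H}$ is modular in $\CH^{e}(N_{K_f^H})_{\C}$ under the Beilinson--Bloch conjecture for $m=e$; there the conjecture is used only to render $cl^{e}_{\Q}$ injective, lifting the unconditional cohomological modularity of Kudla--Millson \cite{KM} to the Chow group. Because $\iota^*\colon\CH^{e}(N_{K_f^H})_{\C}\to\CH^{e}(M_{K_f^G})_{\C}$ is $\C$-linear, it carries a modular series to a modular one: for any $\ell$ with $\ell(Z^G_{\phi_f})$ absolutely convergent, $\ell\circ\iota^*$ is linear on $\CH^{e}(N_{K_f^H})_{\C}$, so $\ell(Z^G_{\phi_f})=(\ell\circ\iota^*)(Z^H_{\phi_f^H})$ is modular by the orthogonal result. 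This yields the $\SU$-modularity of $Z^G_{\phi_f}$ for $r=1$. Step (ii) then upgrades this to $\U(1,1)$-modularity exactly as in the discussion preceding Corollary \ref{MainCor1}: one proves term-wise modularity for the parabolic $P_1$ (as in \cite{Liu}, \cite{Yota}, \cite{YZZ}) and combines it with the modularity for the Weyl element $w_1\in\SU(1,1)$ just obtained, since $P_1$ and $w_1$ generate $\U(1,1)$. Finally step (iii), the induction on $r$ of \cite{YZZ} already used for Corollary \ref{MainCor1}, promotes the divisor case to arbitrary genus and exhibits $Z^G_{\phi_f}(\tau)$ as a Hermitian modular form of weight $n+1$ and genus $r$ in the sense of Definition \ref{def:modular}.

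The main obstacle is the comparison of step (i): proving the pullback identity $\iota^*Z^H(x,g)_{K_f^H}=Z^G(x,g)_{K_f^G}$ on the nose, with the correct transfer $\phi_f\mapsto\phi_f^H$ of Schwartz data and the correct matching of weights, so that the pullback of the orthogonal Hilbert modular form of weight $n+1$ is precisely the unitary series of weight $n+1$; the degenerate terms, where $Ex$ fails to be nondegenerate, must be reconciled with the tautological Chern-class corrections on the unitary side. Everything else is formal once the orthogonal modularity --- and hence the Beilinson--Bloch conjecture for $m=e$ --- is granted, and it runs in exact parallel with the $e=1$ case.
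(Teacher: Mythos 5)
Your proposal is correct and follows essentially the same route as the paper: invariance under the parabolic $P_r$ by direct Weil-representation computation, reduction of $w_{r,r-1}$ for $r>1$ to the genus-one case by the Poisson-summation induction of \cite{YZZ}, and the $w_1$-invariance at $r=1$ obtained by pulling back the orthogonal generating series along $\iota$ and invoking the orthogonal modularity of \cite{Remarks} and \cite{Yota} under the Beilinson--Bloch conjecture for $m=e$, which is exactly where the hypothesis enters in the paper as well. The only divergence is that you propose to establish the pullback identity $\iota^{*}Z^{H}_{\phi_f}=Z^{G}_{\phi_f}$ (including the degenerate terms and Chern-class corrections) by a direct geometric comparison, whereas the paper simply cites \cite[Corollary 3.4]{Liu} together with the proof of \cite[Lemma 3.6]{Liu} for this step, so your ``main obstacle'' is in fact already available in the literature.
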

\begin{rem}
  We assume the Beilinson-Bloch conjecture for $m=e$ for $N_{K_f^H}$ when $2n\geq3$, i.e. $n>1$.
 When $n=1$, we need to assume the Beilinson-Bloch conjecture for $m=e$ for a larger orthogonal Shimura variety $N_{K_f^H}'$ including $N_{K_f^H}$, see \cite[Theorem 1.6]{Yota}.
 For the precise statement of the Beilinson-Bloch conjecture, see \cite[Subsection 1.2]{Yota}.
\end{rem}
\begin{rem}
  \label{unitaryBB}
  Kudla \cite{Remarks} and the author \cite{Yota} proved the modularity of generating series associated with orthogonal Shimura varieties for $e>1$.
  Their results are shown by using the Kudla-Millson's cohomological coefficient result \cite{KM} and reducing the problem to this cohomological case under the Beilinson-Bloch conjecture with respect to orthogonal Shimura varieties.
  So one might think that the modularity of generating series associated with unitary Shimura varieties would also be proved in the same way, but the Hodge numbers appearing in the cohomology of unitary Shimura varieties do not seem to vanish \cite[Remark 1.2]{Remarks}.

  Historically, for unitary Shimura varieties, Kudla-Millson \cite{KM} studied the cohomological coefficients case.
  In the Chow group, Hofmann \cite{Hofmann} showed the $\SL_2$-modularity of the generating series over imaginary quadratic fields for the $r=1,e=1$ case and Liu \cite{Liu} showed Hermitian modularity for $e=1$ case, assuming the absolute  convergence of the generating series.
  Therefore we give a generalization of their work.
  On the other hand, Xia \cite{Xia} showed Liu's result, not assuming the absolute   convergence of the generating series.
  He uses the formal Fourier-Jacobi series method  similar to the work over $\Q$ of Bruinier-Westerholt-Raum \cite{BW}.
  
\end{rem}
  Theorem \ref{MainTheorem2} and Corollary \ref{MainCor1} are included Theorem \ref{MainTheorem3} under the Beilinson-Bloch conjecture, but we give another proof we can apply only for $r=1$, using regularized theta lifts.

We can also restate the result using the Kudla's modularity conjecture for  orthogonal Shimura varieties as follows.

\begin{cor}
\label{restate}
$Z_{\phi_f}^G(\tau)$ is a Hermitian modular form of weight $n+1$ and genus $r$, assuming the modularity of the generating series of special cycles on orthogonal Shimura varieties for $r=1$.
\end{cor}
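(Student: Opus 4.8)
The plan is to read Corollary \ref{restate} off the proof of Theorem \ref{MainTheorem3} by locating the single point at which the Beilinson-Bloch conjecture is invoked and trading it for the hypothesis now assumed. First I would recall the architecture of the proof of Theorem \ref{MainTheorem3}: it splits into a reduction of the unitary problem (Conjecture \ref{Conjecture}) to the orthogonal one through the embedding $\iota\colon M_{K_f^G}\hookrightarrow N_{K_f^H}$ of (\ref{embedding}), together with the genus-raising induction of \cite{YZZ}. Via a precise relation between the orthogonal and unitary special cycles furnished by $\iota$ and the line bundles $\L_{K_f^G,i}$, the modularity of the orthogonal generating series $Z^H_{\phi_f}(\tau)$ in the Chow group for $r=1$ yields the modularity of $Z^G_{\phi_f}(\tau)$ for $r=1$, after which the induction on $r$ upgrades this to arbitrary genus. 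The essential observation is that both the $\iota$-reduction and the induction are unconditional, so the Beilinson-Bloch conjecture plays no role except in producing the orthogonal $r=1$ modularity.

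Next I would pin down exactly how Beilinson-Bloch enters the cited orthogonal result. In \cite{Remarks} and \cite{Yota} the modularity of the orthogonal series in cohomology is the Kudla-Millson theorem \cite{KM}, and the passage from cohomological to Chow-group modularity rests on the injectivity of the cycle class map $cl^{e}_{\Q}\colon\CH^{e}(N_{K_f^H})\otimes\Q\to H^{2e}(N_{K_f^H},\Q)$. As recalled above, this injectivity is precisely the content of the Beilinson-Bloch conjecture for $m=e$ together with the vanishing $H^{2e-1}(N_{K_f^H},\Q)=0$. Hence the sole service rendered by the Beilinson-Bloch hypothesis in Theorem \ref{MainTheorem3} is to establish the $r=1$ modularity of $Z^H_{\phi_f}(\tau)$ in the Chow group.

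With this in hand the proof is a substitution: I take the modularity of the orthogonal generating series for $r=1$ as the input to the $\iota$-reduction in place of the conclusion drawn from \cite{Remarks} and \cite{Yota}. Since the remaining ingredients — the functoriality of $\iota^{*}$ on cycle classes, the comparison between the orthogonal and unitary generating series that it induces, and the induction on $r$ of \cite{YZZ} — make no reference to Beilinson-Bloch, the conclusion that $Z^G_{\phi_f}(\tau)$ is a Hermitian modular form of weight $n+1$ and genus $r$ follows relative to this single hypothesis.

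What needs the most care is bookkeeping rather than new mathematics: I must verify that no instance of Beilinson-Bloch re-enters at higher genus, i.e. that the genus-raising induction consumes only the $r=1$ case and imposes no further cohomological vanishing, and that the orthogonal variety and codimension in the assumed modularity are the correct ones. By the remark following Theorem \ref{MainTheorem3}, for $n>1$ this is the codimension-$e$ modularity on $N_{K_f^H}$, whereas for $n=1$ the hypothesis must be read as $r=1$ modularity on the enlarged variety $N_{K_f^H}'\supset N_{K_f^H}$ of \cite[Theorem 1.6]{Yota}; I would phrase the assumption uniformly as $r=1$ modularity with this convention understood.
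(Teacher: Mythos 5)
Your proposal is correct and takes essentially the same route as the paper: the only conditional step in the proof of Theorem \ref{unitarymodular} is the $w_1$-invariance of $Z^G_{\phi_f}(g')$ at $r=1$, which via Liu's reduction (subsection \ref{subsection:w_1,r=1}) needs exactly the $r=1$ orthogonal modularity $Z^H_{\phi_f}(w_1g')=Z^H_{\phi_f}(g')$, while the $P_r$-invariance and the Poisson-summation induction on $r$ are unconditional. Substituting the assumed orthogonal $r=1$ modularity for the appeal to \cite{Remarks} and \cite{Yota} under Beilinson--Bloch, including your caveat that for $n=1$ the hypothesis refers to the enlarged variety $N_{K_f^H}'$, is precisely how the paper obtains Corollary \ref{restate}.
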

For the reason why we only assume the modularity for $r=1$ on orthogonal Shimura varieties, see subsection \ref{subsection:w_1,r=1}.

\begin{comment}
\subsection{Modular forms}
\label{Subsection:Modular forms}
we construct the theta lift on unitary groups, so first, define modular forms on $G(\Q)$.
Let $\phi_f(\tau,g)$ be a function on $D_E\times G(\A_f)$.
We call $\phi_f(\tau,g)$ be a meromorphic modular form of weight $k$ and level $K_f$ if

The last condition is fulfilled because $1<d$.
We put $\Phi\defeq \phi_f(\tau,g)j(,z)^{-k}$ so that we obtain a function $\Phi(\tau,g)$ on $M_K^H$ which is left $G(\Q)$ invariant and right $K_f$ invariant.
 %definition of $\L_k$
\end{comment}

\begin{comment}
\subsection{Weil representation of (O$\times$Sp)}
\label{Subsection:Weil rep of O,Sp}

\subsection{Beilinson-Bloch conjecture}
\label{Subsection:Beilinson-Bloch}
\end{comment}

\subsection{Outline of the proof of Theorem \ref{MainTheorem2} and Theorem \ref{MainTheorem3}}
As an application of the modularity of special cycles on orthogonal Shimura varieties proved by using the regularized theta lifts, we can prove Theorem  \ref{MainTheorem2} and Corollary  \ref{MainCor1}.
This is another proof of \cite{Liu} for special divisors case.
Theorem \ref{MainTheorem3} is reduced to the orthogonal case \cite{Remarks} and \cite{Yota}, so we have to assume the Beilinson-Bloch conjecture about orthogonal Shimura varieties and this is our solution to Conjecture \ref{Conjecture}.

\subsection{Outline of this paper}
In section 2, we review the modularity of the generating series of special cycles on orthogonal Shimura varieties.
In section 3, we prove the modularity for the $e=1$ case.
In section 4, we give the Hermitian modularity of special cycles for $e>1$ under the Beilinson-Bloch conjecture with respect to orthogonal Shimura varieties.
\begin{comment}
\section{Bruinier's results}
\label{Bruinier}
\subsection{The Whittaker forms}
\label{Subsection:Whittaker forms}

\end{comment}
\section{Modularity on orthogonal groups}
\label{section:Modularity on orthogonal groups}
Through in this section, let $L\subset V_F$ be an even $\OO_F$-lattice and $L'$ be the $\Z$-dual lattice of $L$ with respect to $\Tr_{F/\Q}(\ ,\ )$.
Let $\hat{\Z}\defeq \prod_{p<\infty}\Z_p$ and we define $\hat{L}\defeq L\otimes\hat{\Z}$.
We have $L'/L\cong \hat{L}'/\hat{L}$, so for $\mu\in L'/L$, let $1_{\mu}\in\S(V_F(\A_{F,f}))$ be a characteristic function associated with $\mu+\hat{L}$.

In section \ref{section:Modularity on orthogonal groups}, we assume $n>2$.
\subsection{Regularized theta lifts on orthogonal groups}
We review the results of \cite{Bruinier}.
Let
\[k\defeq(k_1,k_2,\dots,k_d)=(1-n,1+n,\dots,1+n)\in\Z^d\]
and $s_0\defeq 1-k_1=n$.
We call $k$ as weight and define the dual weight $\kappa$ to be
\[\kappa\defeq(2-k_1,k_2\dots,k_d)=(1+n,1+n,\dots,1+n)\in\Z^d.\]
We use Kummer's confluent hypergeometric function
\[M(a,b,z)\defeq \sum_{n=0}^{\infty}\frac{(a)_nz^n}{(b)_nn!}\quad(a,b,z\in\C, (a)_n\defeq \frac{\Gamma(a+n)}{\Gamma(a)}, (a)_0\defeq 1)\]
and Whittaker functions
\[M_{\nu,t}(z)\defeq e^{-z/2}z^{1/2+t}M(1/2+t-\nu,1+2t,z)\quad(t,\nu\in\C)\]
\[\M_s(v_1)\defeq |v_1|^{-k_1/2}M_{\mathrm{sgn}(v_1)k_1/2,s/2}(|v_1|) e^{-v_1/2}\quad(s\in\C,v_1\in\R)\]
%where $v_1$ is the imaginary part of $\sigma_1(\tau)$.

Now we shall define the Whittaker forms
\[f_{m,\mu}(\tau,s)\defeq C(m,k,s)\M_s(-4\pi m_1v_1)\exp(-2\pi\sqrt{-1} \Tr(m\overline{\tau})1_{\mu}) \quad(m_i\defeq\sigma_i(m))\]
where $\mu\in L'/L$ and $1_{\mu}$ is a characteristic function associated with $\mu$.

Here $C(m,k,s)$ is a normalizing factor
\[C(m,k,s)\defeq \frac{(4\pi m_2)^{k_2-1}\dots (4\pi m_d)^{k_d-1}}{\Gamma(s+1)\Gamma(k_2-1)\dots\Gamma(k_d-1)}.\]
We define

\begin{eqnarray*}
f_{m,\mu}(\tau)
&\defeq&f_{m,\mu}(\tau,s_0) \\
&=&C(m,k,s_0)\Gamma(2-k_1)(1-\frac{\Gamma(1-k_1,4\pi m_1v_1)}{\Gamma(1-k_1)})e^{4\pi m_1v_1}\exp(-2\pi\sqrt{-1}\Tr m\overline{\tau})1_{\mu}.
\end{eqnarray*}
 Now for $m\in F$,  $m>>0$ means $m_i\defeq\sigma_i(m)>0$ for all $i$ and $\partial_F$ denotes the different ideal of a totally real field $F$.
 We remember that we consider a finite $\OO_F$-module $L'/L$ equipped with a quadratic form $(\ ,\ )/2$ which takes values in $F/\partial^{-1}\OO_F$ since we assume $L$ is even. 
\begin{defn}
 A harmonic Whittaker form of weight $k$ is a function which has the form
 \[\sum_{\mu\in L'/L}\sum_{m>>0} c(m,\mu)f_{m,\mu}(\tau)\]
 for $c(m,\mu)\in\C$.
 Here the second sum runs $m\in(\mu,\mu)/2+\partial^{-1}\OO_F$.
Let $H_{k,\overline{\rho_L}}$ be the $\C$-vector space consisting of harmonic Whittaker forms of weight $k$.
\end{defn}

\begin{rem}
  Here $\rho_L$ is a lattice model of the Weil representation of the metaplectic group $\Mp_2(\hat{\OO_F})$, and $f_{m,\mu}$ satisfies a certain modularity condition on $\rho_L$ and a certain differential equation.
  For details, see \cite[Chapter 4]{Bruinier}.
\end{rem}

By our assumption on $n>2$ and $\kappa_j\geq 2$ for all $j$, there is a surjective map $\xi_k\colon  H_{k,\overline{\rho_L}}\to S_{\kappa,\rho_L}$ \cite[Proposition 4.3]{Bruinier}.
Here $S_{\kappa,\rho_L}$ is the space of Hilbert modular forms of weight $\kappa$ and type $\rho_L$.
Let $M^!_{k,\overline{\rho_L}}$ be the kernel of this map and we call elements of this space as weakly holomorphic Whittaker forms of weight $k$.

So there is an exact sequence
\[0\to M^!_{k,\overline{\rho_L}}\to H_{k,\overline{\rho_L}}\xrightarrow{\xi_k} S_{\kappa,\rho_L}\to 0.\]
This exact sequence and following pairng are an analogue of classical ones.
See Borcherds \cite{Borcherds}.
There is the Petersson inner product on $S_{k,\rho_L}$.
This pairing is non-degenerate, so induces the non-degenerate pairing between $H_{k,\overline{\rho_L}}/M^!_{k,\overline{\rho_L}}$ and $S_{k,\rho_L}$ defined by
\[\{g,f\}\defeq(g,\xi_k(f))_{\mathrm{Pet}}.\]
We recall the result \cite[Proposition 4.5]{Bruinier} that provides an explicit formula for the above  non-degenerate pairing $\{\ ,\ \}$.

\begin{prop}[{\cite[Proposition 4.5]{Bruinier}}]
\label{pairing}
For $g\in S_{\kappa,\rho_L}$ and $f\in H_{k,\overline{\rho_L}}$ with Fourier expansions 
\begin{align*}
    g&=\sum_{\nu\in L'/L}\sum_{n>>0}b(n,\nu)\exp(2\pi\sqrt{-1}\Tr(n\tau))1_{\nu} \\
    f&=\sum_{\mu\in L'/L}\sum_{m>>0} c(m,\mu)f_{m,\mu}(\tau),
\end{align*}
we have
\[\{g,f\}=\sum_{\mu\in L'/L}\sum_{m>>0}c(m,\mu)b(m,\mu).\]
\end{prop}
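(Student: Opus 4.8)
The plan is to establish the formula by the Stokes-theorem argument of Borcherds \cite{Borcherds} and Bruinier--Funke, carried out over the totally real field $F$. The starting observation is that $\kappa_j=k_j$ for $j\geq 2$ while $\kappa_1=2-k_1$, so the operator $\xi_k$ changes the weight only in the first variable; correspondingly a harmonic Whittaker form $f$ is holomorphic in $\tau_2,\dots,\tau_d$ and merely harmonic in $\tau_1$, which is exactly the dependence exhibited by the factor $\M_s(-4\pi m_1v_1)$. I would first write the pairing as a convergent integral
\[\{g,f\}=(g,\xi_k f)_{\mathrm{Pet}}=\int_{\mathcal{F}}\langle g(\tau),\overline{\xi_k f(\tau)}\rangle\, v^{\kappa}\,d\mu,\]
over a fundamental domain $\mathcal{F}\subset\mathscr{H}^d$ for $\Mp_2(\OO_F)$, where $v^{\kappa}=\prod_j v_j^{\kappa_j}$, $d\mu=\prod_j v_j^{-2}\,du_j\,dv_j$, and $\langle\ ,\ \rangle$ is the pairing on $\C[L'/L]$ induced by $\rho_L$.

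Next I would exhibit the integrand as an exact form. Writing $\xi_{k_1}f=2\sqrt{-1}\,v_1^{k_1}\overline{\partial f/\partial\bar\tau_1}$, the powers of $v_1$ cancel against $v_1^{\kappa_1}$ and the $v_1^{-2}$ in $d\mu$, and since $g$ is holomorphic in all variables and $f$ is holomorphic in $\tau_2,\dots,\tau_d$, the only surviving antiholomorphic derivative is $\partial/\partial\bar\tau_1$. Hence the integrand equals $d\eta$ for the $(2d-1)$-form
\[\eta=-\langle g,f\rangle\Big(\prod_{j=2}^{d}v_j^{\kappa_j}\Big)\,d\tau_1\wedge\bigwedge_{j=2}^{d}\frac{du_j\wedge dv_j}{v_j^{2}}.\]
Applying Stokes' theorem to a domain $\mathcal{F}_T$ truncated at $v_1=T$ near the cusp $\infty$, the boundary contributions coming from the $\Gamma$-identifications of $\partial\mathcal{F}$ cancel in pairs, leaving only the horocyclic integral over $\{v_1=T\}$.

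Finally I would compute this boundary term in the limit $T\to\infty$. Inserting the Fourier expansions of $g$ and of the holomorphic principal part of $f$ (supplied by the $f_{m,\mu}$), the integrations in the $u_j$-directions extract the zero Fourier mode, while the $\C[L'/L]$-pairing forces $\nu=\mu$; matching $\exp(-2\pi\sqrt{-1}\Tr(m\overline{\tau}))$ against $\exp(2\pi\sqrt{-1}\Tr(n\tau))$ then selects $n=m$ and yields $\sum_{\mu}\sum_{m>>0}c(m,\mu)b(m,\mu)$. The non-holomorphic part of $f$ and the non-principal holomorphic terms contribute integrals that vanish as $T\to\infty$ because $g$ is a cusp form.

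The main obstacle is the cusp analysis in the Hilbert modular setting. Unlike the case $F=\Q$, the stabilizer of a cusp involves the action of the totally positive units of $\OO_F$, so the relevant boundary is a horocycle times a fundamental domain for a rank-$(d-1)$ unit action, and one must show that all non-matching Fourier terms integrate to zero there and that the limit and the integral may be interchanged. This rests on the precise growth and decay asymptotics of the Whittaker function $\M_s$ at the specialization $s=s_0=n$, which is exactly what the normalizing factor $C(m,k,s)$ is designed to control; verifying convergence of the regularized integral together with these asymptotics is the technical heart of the argument.
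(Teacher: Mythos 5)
Your plan is the Borcherds/Bruinier--Funke Stokes-theorem argument, but it cannot get off the ground in this setting --- and note that the paper itself gives no proof at all: the proposition is quoted verbatim from Bruinier, whose proof is a Poincar\'e-series unfolding, not a boundary computation. The crucial point you have missed is that a harmonic Whittaker form $f=\sum_{\mu}\sum_{m}c(m,\mu)f_{m,\mu}$ is \emph{not} a modular object: by the Koecher principle there are no harmonic Maass forms over a totally real field $F\neq\Q$ with prescribed singular principal parts, which is exactly why Bruinier replaces them by Whittaker forms --- finite sums of Whittaker functions with no invariance under the full group $\Gamma$. Consequently your first displayed identity already fails twice over. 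First, the integrand $\langle g,\overline{\xi_k f}\rangle v^{\kappa}$ is not $\Gamma$-invariant, so its integral over a fundamental domain $\mathcal{F}$ is not the Petersson pairing, and the "boundary contributions from the $\Gamma$-identifications" do not cancel. Second, the map $\xi_k$ of Bruinier's Proposition 4.3 is not the pointwise lowering operator $2\sqrt{-1}\,v_1^{k_1}\,\overline{\partial f/\partial\bar\tau_1}$ that you use: applied pointwise, that operator sends $f_{m,\mu}$ to an exponentially decaying Whittaker \emph{function}, not to a cusp form in $S_{\kappa,\rho_L}$. Bruinier's $\xi_k$ includes an averaging over $\Gamma_\infty\backslash\Gamma$, so that $\xi_k(f_{m,\mu})$ is a normalized multiple of the Poincar\'e series $P_{\kappa,m,\mu}$; this is what makes it land in $S_{\kappa,\rho_L}$ and makes the map surjective (here the assumption $\kappa_j\geq 2$, indeed $\kappa_j=n+1$ with $n>2$, ensures convergence). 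A small symptom of the same inattention to the definitions: $f_{m,\mu}$ contains the factor $\exp(-2\pi\sqrt{-1}\Tr(m\overline{\tau}))$ and is therefore antiholomorphic, not holomorphic, in $\tau_2,\dots,\tau_d$ --- consistent with its weight being taken with respect to the conjugate representation $\overline{\rho_L}$.

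Once $\xi_k(f_{m,\mu})$ is recognized as a Poincar\'e series, the correct proof is short and has no boundary terms: unfold $(g,\xi_k(f_{m,\mu}))_{\mathrm{Pet}}$ over $\Gamma_\infty\backslash\H^d$. The $u$-integrations extract the $(m,\mu)$-th Fourier coefficient $b(m,\mu)$ of $g$ (this is where your "match $n=m$, $\nu=\mu$" computation actually lives), and the remaining $v$-integrals produce Gamma factors of the shape $\Gamma(\kappa_j-1)/(4\pi m_j)^{\kappa_j-1}$, which the normalizing constant $C(m,k,s)$ at $s=s_0$ is designed to cancel exactly --- that, and not the control of a truncated boundary integral at $v_1=T$, is its purpose. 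So the cusp-geometry and unit-action difficulty you single out as the "technical heart" is not where the content is; the content is the classical Petersson coefficient formula for vector-valued Hilbert Poincar\'e series. Your Stokes argument is the right instinct for $F=\Q$, where it recovers the Bruinier--Funke pairing, but over a totally real field of degree $d>1$ it is structurally unavailable, not merely technically harder.
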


We remark that Whittaker forms are analog of Maas forms.
See \cite[Subsection 4.1]{Bruinier}.

For $f=\sum_{\mu}\sum_{m}c(m,\mu)f_{m,\mu}(\tau)\in H_{k,\overline{\rho_L}}$, we define $Z(f)\defeq \sum_{\mu}\sum_{m}c(m,\mu)Z^H(m,\mu)_{K_f^H}$.
Let $I\defeq\Res_{F/\Q}\SL_2$ and $\chi_V$ be a quadratic character of $\A_F^{\times}/F^{\times}$ associated to $V$ given by
\[\chi_V(x)\defeq(x,(-1)^{\ell(\ell-1)/2}\det(V))_F\quad(\ell\defeq 2n+2).\]
We review Eisenstein series \cite[subsection 6.2]{Bruinier}
Let $Q\subset H$ be the parabolic subgroup consisting of upper triangle matrices and $s\in\C$.
We take a standard section $\Phi\in I(s,\chi)\defeq \Ind_Q^H\chi_V|\cdot|^s$.
Now we have the Eisenstein series
\[E(g,s,\Phi)\defeq \sum_{\gamma\in I(F)\backslash H(F)}\Phi(\gamma g).\]
\[E(\tau,s,\ell;\Phi_f)\defeq v^{-\ell/2}E(g_{\tau},s,\Phi_f\otimes\Phi_{\infty}^{\ell})\]
where $g_{\tau}\in\Mp_2(\R)^d$ satisfies  $g_{\tau}(\sqrt{-1},\dots,\sqrt{-1})=\tau\in\H^d$ and $\Phi_{\infty}^{\ell}$ is defined in \cite[Chapter 6]{Bruinier}.
Let $1_{\mu}$ be the characteristic function of $\mu\in L'/L$.
Here the Weil representation gives an intertwining operator
\[\lambda=\lambda\otimes\lambda_f\colon\S(V(\A_F))\to I(s_0,\chi_V).\]
We obtain a vector valued Eisenstein series of weight $\ell$ with respect to $\rho_L$ by
\[E_L(\tau,s,\ell)\defeq\sum_{\mu}E(\tau,s,\ell;\lambda_f(1_{\mu}))1_{\mu}\]
We get the Fourier expansion of the Eisenstein series at $\infty$:
\[E_L(\tau,\kappa)\defeq E_L(\tau,s_0,\kappa)=1_0+\sum_{\mu\in L'/L}\sum_{m>>0}B(m,\mu)\exp(2\pi\sqrt{-1}\Tr(m\tau))1_{\mu}.\]
%We put $B(0,\mu)\defeq 1/|L'/L|$.
We define
\[B(f)\defeq \sum_{\mu\in L'/L}\sum_{m>>0}c(m,\mu)B(m,\mu)\]
for a harmonic Whittaker form $f=\sum_{\mu }\sum_{m}c(m,\mu)f_{m,\mu}$.

The following theorem is the regularized theta lift over totally real fields, proved by Bruinier  \cite[Theorem 1.3]{Bruinier}.
\begin{thm}[{\cite[Theorem 6.8]{Bruinier}}]
\label{orthoglift}
Let $f\in M!_{k,\overline{\rho_L}}$ be a weakly holomorphic Whittaker form of weight $k$ for $\Gamma=\SL_2(\OO_F)\subset I(\R)=\Res_{F/\Q}\SL_2(\R)$ whose coefficients $c(m,\mu)$ are integral.
Then there exists a meromorphic modular form $\Psi_f(\tau,g)$ for $H(\Q)$ of level $K_f^H$ satisfing
\begin{enumerate}
\item The weight of $\Psi$ is $-B(f)$.

\item $\div\Psi=Z(f)$
\end{enumerate}
\end{thm}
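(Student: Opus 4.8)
The natural strategy is to realize $\Psi_f$ as a Borcherds product, i.e.\ to construct it from a regularized theta lift of $f$ and then read off its divisor and weight from the analytic behaviour of the lift. First I would attach to the even lattice $L\subset V_F$ the Siegel theta function $\Theta_L(\tau,z)$, a function on $\H^d\times D_F$ valued in $\C[L'/L]$ that transforms under the Weil representation $\rho_L$ of $\Mp_2$ in the variable $\tau$ and is invariant under $H(\Q)$ of level $K_f^H$ in $z$. I would then form the regularized theta lift
\[
\Phi_f(z)\defeq\int^{\mathrm{reg}}_{\Gamma\backslash\H^d}\langle f(\tau),\overline{\Theta_L(\tau,z)}\rangle\,d\mu(\tau),
\]
where $d\mu$ is the invariant measure on $\Gamma\backslash\H^d$ and the regularization is carried out in the style of Harvey--Moore and Borcherds in order to absorb the exponential growth coming from the principal part of the weakly holomorphic form $f\in M^!_{k,\overline{\rho_L}}$. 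Since the pairing respects $\rho_L$, the integrand is $\Gamma$-invariant and $\Phi_f$ descends to a function on $N_{K_f^H}$.

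Next I would analyse $\Phi_f$ by unfolding against the Fourier expansion $f=\sum_{\mu}\sum_{m}c(m,\mu)f_{m,\mu}(\tau)$. The terms indexed by $(m,\mu)$ with $m>0$ contribute logarithmic singularities of $\Phi_f$ concentrated along the special divisors $Z^H(m,\mu)_{K_f^H}$ with multiplicity exactly $c(m,\mu)$, while away from these loci $\Phi_f$ is real-analytic. This identifies the singular locus of $\Phi_f$ with $Z(f)=\sum_{\mu}\sum_{m}c(m,\mu)Z^H(m,\mu)_{K_f^H}$, which will become $\div\Psi_f$. The contribution of the constant term of the integrand is governed by the Eisenstein series $E_L(\tau,\kappa)$ through its Fourier coefficients $B(m,\mu)$; pairing the principal part of $f$ against this constant term yields precisely the quantity $B(f)=\sum_{\mu}\sum_{m}c(m,\mu)B(m,\mu)$, which I expect to be the source of the weight.

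To pass from $\Phi_f$ to a holomorphic object I would use the differential equation satisfied by $\Theta_L$ under the invariant Laplacian. Combined with the harmonicity of $f$ it forces a Green-type current identity, up to normalization of signs, of the form
\[
dd^{c}\Phi_f=B(f)\,\Omega-\delta_{Z(f)},
\]
where $\Omega$ is the invariant first Chern form of the line bundle of modular forms of weight one on $N_{K_f^H}$ and $\delta_{Z(f)}$ is the current of integration along $Z(f)$. Away from $Z(f)$ this says $\Phi_f$ is pluriharmonic, so locally $\Phi_f=-2\log|\Psi_f|+(\text{pluriharmonic})$ for a holomorphic $\Psi_f$; the integrality of the coefficients $c(m,\mu)$ is exactly what upgrades the a priori multivalued phase into a single-valued meromorphic function and forces the local factors to patch. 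A factor-of-automorphy computation then shows that $\Psi_f$ is a meromorphic modular form for $H(\Q)$ of level $K_f^H$, of weight $-B(f)$ and with $\div\Psi_f=Z(f)$, giving the two assertions.

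The main obstacle is the singularity analysis of the regularized integral over the totally real locus $\Gamma\backslash\H^d$ together with the proof that the resulting local expansions are genuine single-valued Borcherds products rather than merely determining $|\Psi_f|$. Over $\Q$ this is Borcherds' theorem, but over a totally real field the Weil-representation bookkeeping spreads over all $d$ archimedean places, and the mixed nature of the Whittaker forms---real-analytic through the factor $\M_{s_0}$ at the indefinite place $\sigma_1$ and holomorphic at $\sigma_2,\dots,\sigma_d$---makes both the unfolding and the convergence of the product delicate. Controlling these, and verifying that the integrality hypothesis on $c(m,\mu)$ indeed removes the multivaluedness, is the heart of the argument.
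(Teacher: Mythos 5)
The paper contains no proof of this statement at all: it is quoted verbatim as \cite[Theorem 6.8]{Bruinier}, so your proposal must be measured against Bruinier's own argument. Your global architecture is the right one (regularized theta lift, logarithmic singularities along $Z(f)$ with multiplicities $c(m,\mu)$, the Eisenstein coefficients $B(m,\mu)$ producing the weight $-B(f)$, a current identity feeding the construction of $\Psi_f$), but your first step fails as written. The integral $\int^{\mathrm{reg}}_{\Gamma\backslash\H^d}\langle f(\tau),\overline{\Theta_L(\tau,z)}\rangle\,d\mu(\tau)$ is ill-defined, because a weakly holomorphic Whittaker form is \emph{not} $\Gamma$-invariant: $f$ is a finite combination of terms $f_{m,\mu}$, each a single Whittaker function times $1_\mu$, covariant only under the stabilizer $\Gamma_\infty$ of the cusp, so ``the pairing respects $\rho_L$'' does not make the integrand descend to $\Gamma\backslash\H^d$. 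This is not a technicality but the raison d'\^etre of Bruinier's paper: for $d>1$ the Koecher principle shows that weakly holomorphic Hilbert modular forms with poles at the cusps --- the honest automorphic inputs of Borcherds' machine over $\Q$, which your sketch silently presumes --- do not exist. Bruinier's lift is therefore an ``already unfolded'' integral over $\Gamma_\infty\backslash\H^d$, regularized via the auxiliary spectral parameter $s$ of $f_{m,\mu}(\tau,s)$ (Hecke's trick), not via Harvey--Moore/Borcherds truncation of a fundamental domain.

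The second genuine gap is in the passage from $\Phi_f$ to a single-valued meromorphic $\Psi_f$. Since $V_F$ is positive definite at $\sigma_{e+1},\dots,\sigma_d$, it is anisotropic over $F$, so $N_{K_f^H}$ is compact: there are no cusps, no Fourier expansions, and in particular no infinite product whose ``convergence'' could be delicate --- your closing paragraph is aimed at an object that does not exist in this setting. Bruinier's actual mechanism is cohomological. The hypothesis $f\in M^!_{k,\overline{\rho_L}}$, i.e. $\xi_k(f)=0$, is what makes $\Phi(\cdot,f)$ pluriharmonic off $Z(f)$; mere harmonicity, which is all your current identity invokes, would leave an extra smooth term (the theta lift of the cusp form $\xi_k(f)\in S_{\kappa,\rho_L}$) in $dd^c\Phi_f$, and then no meromorphic $\Psi_f$ exists. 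Granted pluriharmonicity, the local potentials $-2\log|\psi|$ glue to a global $\Psi_f$ because $H^1(N_{K_f^H},\OO)=0$ for these compact quotients (this vanishing is where a largeness hypothesis such as the paper's standing assumption $n>2$ enters); the integrality of the $c(m,\mu)$ only ensures the divisor is integral, so that the monodromy of the logarithm lies in $2\pi\sqrt{-1}\,\Z$. Your proposal asks integrality to do the patching by itself and never uses $\xi_k(f)=0$ or the vanishing of $H^1$, which are the heart of Bruinier's proof.
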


\subsection{Modularity of special cycles on orthogonal groups}
Now we review the modularity of special divisors on orthogonal Shimura varieties.
To state the theorem, we need to prepare the ``modified" generating series.

\[A_0(\tau)\defeq \sum_{\mu\in L'/L}-c_1(\L)1_{\mu}+\sum_{\mu\in L'/L}\sum_{m>>0}(Z^H(m,1_{\mu})_{K_f^H}+B(m,\mu)c_1(\L))q^m 1_{\mu}\]

%\[Z_0^H(m,\phi_f)\defeq Z^H(m,\phi_f)-B(m,\mu)c_1(\F_1)\]
\[A(\tau,\phi_f)\defeq -c_1(\L)+\sum_{m>>0}Z^H(m,\phi_f)_{K_f^H}q^m\]
%\[A_0(\tau,\phi_f)\defeq -c_1(\F_1)+\sum_{m}Z_0^H(m,\phi_f)q^m\]
\[A(\tau)\defeq \sum_{\mu}A(\tau,1_{\mu})1_{\mu}=\sum_{\mu\in L'/L}-c_1(\L)1_{\mu}+\sum_{\mu\in L'/L}\sum_{m>>0}Z^H(m,1_{\mu})_{K_f^H}q^m 1_{\mu}\]
%\[A_0(\tau)\defeq \sum_{\mu}A_0(\tau,1_{\mu})1_{\mu}\]

We want to show the modularity of $Z_{\phi_f}(\tau)$ but first, prove the modularity of $A_0(\tau)$.
See Remark \ref{A_0toA}.
We remark that $A(\tau,\phi_f)=Z^H_{\phi_f}(\tau)$.
The following theorem was proved by Bruinier \cite{Bruinier}.
\begin{thm}[{\cite[Theorem 7.1, Proposition 7.3]{Bruinier}}]
  \label{orthogmodular}
  For any $n>0$, 
  \[A_0(\tau)\in S_{\kappa,\rho_L}\otimes\CH^1(N_{K_f^H})\]
\end{thm}

\begin{rem}
  \label{A_0toA}
For $\C$-linear map $\ell$ such that $\ell(A_0)$ and $\ell(A)$ are absolutely convergent, we know
\[\ell(A_0(\tau))=\ell(A(\tau))+c_1(\L)E_L(\tau,\kappa)\]
by \cite[Remark 6.5]{Bruinier}.
So we also get
\[A(\tau)\in S_{\kappa,\rho_L}\otimes\CH^1(N_{K_f^H}).\]
\end{rem}

\begin{rem}
\label{rem:modularity on orthogonal groups}
  We want to show the modularity of $Z^H_{\phi_f}(\tau)$.
  Now $\phi_f$ is a locally constant, compactly supported function, so we can factorize this as $\phi_f=\sum_{\mu\in L'/L}e_{\mu}1_{\mu}$ for some $e_{\mu}\in\C$ for $\mu\in L'/L$.
  We define
  \begin{align*}
      \delta\colon S_{\kappa,\rho_L}\otimes\CH^1(N_{K_f^H})_{\C}\subset S_L\otimes\CH^1(N_{K_f^H})_{\C}&\to\CH^1(N_{K_f^H})_{\C}[[q]]\\
      \sum_{\mu\in L'/L}\sum_{m}b(m,\mu)1_{\mu}\otimes Z_{m,\mu} q^m &\mapsto \sum_{\mu\in L'/L}\sum_{m}b(m,\mu)e_{\mu}Z_{m,\mu} q^m 
  \end{align*}
  where $\sum_{\mu\in L'/L}\sum_{m}b(m,\mu)1_{\mu}q^m \in S_{\kappa,\rho_L}, Z_{m,\mu}\in\CH^1(N_{K_f^H})_{\C}$.
  Then
  $\delta(A(\tau))=Z^H_{\phi_f}(\tau)$, so this is modular by Theorem \ref{orthogmodular} and Definition \ref{def:modular}.
  See also \cite[Section 2.3]{Bruinier}.

\end{rem}

\begin{comment}
\section{Application to the Unitary case}
\label{apptounitary}

Now we construct the regularized theta lift on unitary groups.
Through the canonical injection $\U(V_E)\hookrightarrow\SO(V_F)$, we obtain the embedding of Shimura varieties $\iota\colon M_{K_f^G}\hookrightarrow N_{K_f^H}$.

\begin{lem}
\label{Lemma:res}
For $x\in V_E$ and $g\in G(\A_f)$,
\[\iota(Z^G(x,g)_{K_f^G})=\iota(M_{K_f^G})\cap Z^H(x,g)_{K_f^H}\]
\end{lem}
\begin{proof}
See \cite[Lemma 6.1]{Hofmann}
\end{proof}
\end{comment}

\begin{comment}
We get the regularized theta lift on unitary groups.
\begin{thm}
\label{unitarylift}
Let $f=\sum_{\mu}\sum_{m}c(m,\mu)f_{m,\mu}\in M!_{k,\overline{\rho_L}}$ be a weakly holomorphic Whittaker form of weight $k$ for $\Gamma$ whose coefficients $c(m,\mu)$ are integral.
Then there exists a meromorphic modular form $\Psi_f(\tau,g)$ for $G(\Q)$ of level $K_f^G$ satisfing
\begin{enumerate}
\item The weight of $\Psi$ is $-B(f)$.

\item $\div\Psi=\iota^{-1}(Z(f))=\Sigma_{\mu}\Sigma_{m} c(m,\mu)Z^G(m,\mu)_{K_f^G}$
\end{enumerate}
\end{thm}

\begin{proof}
Pull back the regularized theta lift on orthogonal groups constructed in Theorem \ref{orthoglift} and use Lemma
 \ref{Lemma:res}.
\end{proof}
This proves Theorem \ref{MainTheorem1}.
\end{comment}

\section{Modularity of special cycles on unitary groups for $e=1$ case}
\label{Section:Modularity}
\subsection{Divisors case}

\begin{thm}
Assume $e=1$ and $r=1$.
Then $Z_{\phi_f}^G(\tau)$ is an elliptic modular form of weight $n+1$.
\end{thm}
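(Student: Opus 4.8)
The plan is to deduce the modularity of the unitary generating series $Z_{\phi_f}^G(\tau)$ from the already-established modularity of the orthogonal generating series $Z_{\phi_f}^H(\tau)$ (Theorem \ref{orthogmodular} together with Remark \ref{A_0toA} and Remark \ref{rem:modularity on orthogonal groups}), using the embedding $\iota\colon M_{K_f^G}\hookrightarrow N_{K_f^H}$ of (\ref{embedding}). The bridge between the two sides is the pullback compatibility of special cycles: for $x\in V_E$ and $g\in G(\A_f)$, the orthogonal cycle restricts to the unitary cycle, i.e. $\iota^*Z^H(x,g)_{K_f^H}=Z^G(x,g)_{K_f^G}$ up to the Chern-class correction coming from the line bundle $\L$. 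This is the content of the (commented-out) Lemma on $\iota(Z^G(x,g))=\iota(M_{K_f^G})\cap Z^H(x,g)$, following \cite[Lemma 6.1]{Hofmann}. Once this intersection-theoretic comparison is in hand, the pullback map $\iota^*\colon \CH^1(N_{K_f^H})_{\C}\to\CH^1(M_{K_f^G})_{\C}$ carries $Z^H_{\phi_f}(\tau)$ coefficientwise to $Z^G_{\phi_f}(\tau)$.

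First I would make precise the relationship between the Weil/Schwartz data on the two groups. Since $V_F=V_E$ as an $F$-space with $(\ ,\ )=\Tr_{E/F}\langle\ ,\ \rangle$, a $K_f^G$-invariant function $\phi_f\in\S(V_E(\A_f))^{K_f^G}$ is also a Schwartz function on $V_F(\A_f)$, and the moment matrix $T(x)=\tfrac12((x_i,x_j))$ used in $q^{T(x)}$ is the same on both sides. I would check that the double-coset bookkeeping matches: the sum over $G(\Q)\backslash V_E^r$ and $G_x(\A_f)\backslash G(\A_f)/K_f^G$ defining $Z^G_{\phi_f}$ is the restriction to $V_E$ of the orthogonal sum defining $Z^H_{\phi_f}$, with $K_f^G=H(\A_f)\cap K_f^H$ ensuring compatibility of the level structures. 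For $r=1$ the special cycles are divisors and $U(x)$ is at most one-dimensional, so the Chern-class power $(\mathrm{c}_1(\L^\vee))^{r-\dim U(x)}$ contributes only the correction term $-c_1(\L)$ that already appears in the normalized series $A(\tau,\phi_f)=Z^H_{\phi_f}(\tau)$.

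Next I would apply $\iota^*$ to the orthogonal modularity statement. Concretely, for any $\C$-linear functional $\ell\colon\CH^1(M_{K_f^G})_{\C}\to\C$ such that $\ell(Z^G_{\phi_f})$ is absolutely convergent, the composite $\ell\circ\iota^*$ is a $\C$-linear functional on $\CH^1(N_{K_f^H})_{\C}$, and by the cycle comparison one has $\ell(Z^G_{\phi_f})(\tau)=(\ell\circ\iota^*)(Z^H_{\phi_f})(\tau)$. Since $Z^H_{\phi_f}(\tau)$ is modular in the sense of Definition \ref{def:modular} (via $\delta(A(\tau))=Z^H_{\phi_f}(\tau)$ landing in $S_{\kappa,\rho_L}\otimes\CH^1(N_{K_f^H})$), the right-hand side is an elliptic/Hilbert modular form, hence so is $\ell(Z^G_{\phi_f})$. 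As $\ell$ was arbitrary, $Z^G_{\phi_f}(\tau)$ is modular. The weight must then be identified: the orthogonal theory produces weight $\kappa_j=n+1$ for the relevant place, which translates into weight $n+1$ for the unitary Hermitian modular form, matching the statement.

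The main obstacle I expect is twofold. The first and more delicate point is the precise cycle-theoretic comparison under $\iota^*$: one must verify that pulling back the orthogonal special divisor $Z^H(m,\mu)_{K_f^H}$ really yields the unitary special divisor $Z^G(m,\mu)_{K_f^G}$ (plus the correct $c_1(\L)$ term), including checking that the excess-intersection contributions and the relation between the orthogonal and unitary line bundles behave as expected; this is where one genuinely uses the geometry of $\iota$ and \cite[Lemma 6.1]{Hofmann}. The second is a bookkeeping subtlety flagged in the introduction: Theorem \ref{MainTheorem2} only establishes $\SU(1,1)$-modularity directly, because the exceptional isomorphism $\SL_2\cong\SU(1,1)$ means the orthogonal input sees only the $\SU(1,1)$-part; promoting this to full $\U(1,1)$-modularity requires separately verifying term-wise invariance under the parabolic $P_1$ and then combining with the known $w_1$-invariance, as indicated in the discussion following the statement. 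For the divisor statement proper, however, the $\SU(1,1)$-modularity obtained by transporting Bruinier's result through $\iota^*$ suffices, and the upgrade to $\U(1,1)$ is handled afterward in Corollary \ref{MainCor1}.
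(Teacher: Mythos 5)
Your proposal takes essentially the same route as the paper: the paper's proof pulls back the orthogonal generating series along $\iota$ via the identity $\iota^{\star}Z^H_{\phi_f}(\tau)=Z^G_{\phi_f}(\tau)$ (cited from Liu's Corollary 3.4, the published counterpart of the Hofmann-style cycle comparison you invoke), then applies the orthogonal modularity of Theorem \ref{orthogmodular} and Remark \ref{rem:modularity on orthogonal groups}, reading off the weight $n+1$ from the orthogonal side. Your extra care about the linear-functional transport in the sense of Definition \ref{def:modular} and the $\SU(1,1)$-versus-$\U(1,1)$ caveat matches exactly what the paper does and defers to Corollary \ref{MainCor1}.
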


\begin{proof}
By \cite[Corollary 3.4]{Liu}, we have $\iota^{\star}Z_{\phi_f}^H(\tau)=Z_{\phi_f}^G(\tau)$.
  So by Remark \ref{rem:modularity on orthogonal groups}, the generating series $Z_{\phi_f}^G(\tau)$ is an elliptic modular form.
  Since the weight of $Z_{\phi_f}^H(\tau)$ is $n+1$, this finishes the proof.
\end{proof}
This gives a proof of Theorem \ref{MainTheorem2}.
Here we remark that to prove the modularity of $Z_{\phi_f}^G(\tau)$ for $n>1$, we use the perfect pairing presented in Proposition \ref{pairing}.
For $n=1$, we use an embedding trick. 
For more details, See \cite{Bruinier} or \cite{Yota}.

\subsection{General $r$ case}
To show the Hermitian modularity, we reduce the problem to the generators of the associated unitary group.
Now the indefinite unitary group $\U(r,r)$ is generated by the parabolic subgroup $P_r(F)=M_r(F)N_r(F)$ and $w_{r,r-1}$ where

\[M_r(F)\defeq\{m(a)=\begin{pmatrix}
a & 0 \\
0 & \overline{\transp{a}^{-1}}
\end{pmatrix} |\ a\in \GL_r(E) \}\]
\[N_r(F)\defeq\{n(u)=\begin{pmatrix}
1_r & u \\
0 & 1_r
\end{pmatrix} |\ u\in \Her_r(E)\}\]
\[w_{r,r-1}\defeq \begin{pmatrix}
1_{r-1} & 0 & 0_{r-1} & 0 \\
0 & 0 & 0& 1 \\
 0_{r-1} & 0 & 1_{r-1} & 0 &  \\
0    & -1 & 0 & 0 &
\end{pmatrix}\]
by \cite[Theorem 3.5]{Liu}.
We put $w_1\defeq w_{1,0}$.
By induction on $r$, we get the following result.
\begin{cor}
\label{cor:e=1,modularity}
Assume $e=1$.
Then $Z_{\phi_f}^G(\tau)$ is a Hermitian modular form of weight $n+1$ and genus $r$.
\end{cor}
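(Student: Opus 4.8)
The plan is to argue by induction on the genus $r$, reducing the genus-$r$ Hermitian modularity to the behaviour of $\ell(Z_{\phi_f}^G)$ under a finite generating set of $\U(r,r)$. By Definition \ref{def:modular} it suffices to fix a $\C$-linear functional $\ell$ for which $\ell(Z_{\phi_f}^G)$ converges and to verify that $\ell(Z_{\phi_f}^G)(\tau)$ satisfies the transformation law of a Hermitian modular form of weight $n+1$ for the relevant congruence subgroup. The base case $r=1$ is Theorem \ref{MainTheorem2} upgraded to full $\U(1,1)$-modularity: since $P_1$ and $w_1$ generate $\U(1,1)$, and $w_1\in\SU(1,1)$ is already covered by Theorem \ref{MainTheorem2}, only the $P_1$-transformation remains, and it is verified term-wise. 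For the inductive step I assume the statement in every genus smaller than $r$. By \cite[Theorem 3.5]{Liu} the group $\U(r,r)$ is generated by the Siegel parabolic $P_r(F)=M_r(F)N_r(F)$ together with the Weyl element $w_{r,r-1}$, so it is enough to establish the transformation law of $\ell(Z_{\phi_f}^G)$ under each of these two generators.

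First I would treat the parabolic $P_r(F)$. For $n(u)\in N_r(F)$ the action on the period matrix is $\tau\mapsto\tau+u$; since the moment matrix $T(x)=\tfrac12((x_i,x_j))_{i,j}$ pairs integrally with $u$, the exponential $q^{T(x)}$ is unchanged and $\ell(Z_{\phi_f}^G)$ is fixed by $N_r(F)$. For $m(a)\in M_r(F)\cong\GL_r(E)$ the induced substitution carries $x\mapsto xa$ and $T(x)\mapsto\overline{\transp{a}}\,T(x)\,a$. Because $Z^G(x,g)_{K_f^G}$ depends only on the subspace $U(x)=U(xa)$ and on the Chern-class factor, which is insensitive to this change of basis, the coefficients reorganize into the same series up to the weight-$(n+1)$ automorphy factor attached to $m(a)$; this is exactly the term-wise modularity for $P_r$.

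The heart of the argument is invariance under $w_{r,r-1}$. The shape of the matrix shows that $w_{r,r-1}$ acts as the identity on the first $r-1$ coordinates and as the rank-one element $w_1=w_{1,0}$ on the last one. I would therefore expand $\ell(Z_{\phi_f}^G)$ as a Fourier--Jacobi series in the last variable $\tau_r$, keeping $\tau_1,\dots,\tau_{r-1}$ as parameters: grouping the moment matrices according to their bottom-right block decomposes each genus-$r$ coefficient into genus-$(r-1)$ special-cycle data twisted by a rank-one contribution in $\tau_r$. The induction hypothesis governs the genus-$(r-1)$ factor, while the base case (Theorem \ref{MainTheorem2}) governs the rank-one factor, and assembling them yields the required transformation under $w_{r,r-1}$. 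This is the induction mechanism of \cite{YZZ} carried over to the unitary setting as in \cite{Liu}.

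The main obstacle is precisely this last step: one must organize the Fourier--Jacobi expansion so that the genus-$r$ cycles split cleanly as genus-$(r-1)$ cycles times a genus-one piece, and then check that the induction hypothesis and the base case combine with the correct weight $n+1$ automorphy factor. By contrast the $P_r$-step is essentially formal once the $\GL_r(E)$-equivariance of the special cycles is recorded, and the convergence issues are absorbed into the quantifier over $\ell$ in Definition \ref{def:modular}, so that no separate absolute-convergence statement is needed.
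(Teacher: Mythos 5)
Your skeleton coincides with the paper's: base case $r=1$ from Theorem \ref{MainTheorem2} plus term-wise $P_1$-invariance, then reduction of genus $r$ to the generators $P_r(F)$ and $w_{r,r-1}$ of $\U(r,r)$ via \cite[Theorem 3.5]{Liu}, with convergence absorbed into the quantifier over $\ell$ as in Definition \ref{def:modular}. The $P_r$-step is essentially right, though your justifications are off in detail: $n(u)$-invariance for \emph{all} $u\in\Her_r(F)$ is not because ``$T(x)$ pairs integrally with $u$'' (false for general rational $u$), but because in the adelic formulation $\omega_{\A}(n(u))$ multiplies $(\phi_f\otimes\varphi_+^d)(x)$ by $\psi(\Tr(uT(x)))$, which is $1$ by triviality of $\psi$ on principal ad\`eles; and the $m(a)$-step is genuine invariance of $Z^G_{\phi_f}(g')$ obtained from $Z^G(x)_{K_f^G}=Z^G(xa)_{K_f^G}$ and re-indexing $x\mapsto xa$, the weight being carried by the archimedean Gaussian rather than by an automorphy factor appearing at this stage (subsection \ref{subsection:parabolic}).

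The genuine gap is in the $w_{r,r-1}$-step, exactly where you flag the main obstacle. Your proposed splitting --- genus-$r$ cycles factoring ``cleanly as genus-$(r-1)$ cycles times a genus-one piece,'' with the induction hypothesis governing the genus-$(r-1)$ factor --- is not how the cycles decompose, and the genus-$(r-1)$ hypothesis is in fact never used. The paper's mechanism (subsection \ref{subsection:w_1}) fixes the first $r-1$ coordinates $x\in K_f^G\backslash\widehat{V}_E^{r-1}$ as spectators (since $w_{r,r-1}$ acts only on the last coordinate) and splits the last coordinate as $y=y_1+y_2$ with $y_1\in x^{\perp}$ and $y_2\in Ex$. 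The inner $y_1$-sum is a genus-\emph{one} generating series on the sub-Shimura variety attached to the stabilizer (level $K_{f,x}^G$, space $x^{\perp}$), so the only modularity input is the $r=1$ case applied to smaller unitary groups --- not genus $r-1$. Two further facts, absent from your sketch, complete the step: the Weil-representation identity $\omega_{\A}(w_1)(\phi_f\otimes\varphi_+^d)(x,y)=(\phi_f\otimes\varphi_+^d)^{y}(x,y)$ (partial Fourier transform in the last coordinate), and Poisson summation over the degenerate line $y_2\in Ex$, which converts the transformed series back into $Z^G_{\phi_f}(g')$. The ``rank-one contribution'' you assign to the base case is really this degenerate $y_2$-direction, and it is controlled by Poisson summation, not by Theorem \ref{MainTheorem2}; without the $x^{\perp}$/$Ex$ decomposition and the Poisson step, your ``assembling them yields the required transformation'' has no argument behind it, and the factorization it presupposes fails (the cycles $Z^G(y_1)_{K_{f,x}^G}$ live on varying subvarieties indexed by $x$, not on a product). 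This is also why the paper can bottom the whole induction out at $r=1$ only (cf.\ Corollary \ref{restate} and subsection \ref{subsection:w_1,r=1}).
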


\begin{proof}
  To prove that the generating series $Z_{\phi_f}^G(\tau)$ is a Hermitian modular form for $r=1$, we already know the modularity for $\SU(1,1)$.
  So, in particular, we know the modularity for the element $w_1\in\SU(1,1)$.
  Hence it suffices to prove the modularity for the parabolic subgroup $P_1\subset\U(1,1)$ because $\U(1,1)$ is generated by $P_1$ and $w_1=w_{1,0}$.
  We can prove the invariance under $P_1$ in the same way as \cite{Liu} or \cite{Yota}.
  This finishes the proof of the corollary for the $r=1$ case.
  For $r>1$, we use induction on $r$.
  More specifically, for any $r$, we can prove the modularity for $P_r$ in the same way as in subsection \ref{subsection:parabolic}, i.e.
  \begin{align*}\omega_f(n(u)_fg_f')(\phi_f\otimes\varphi_+^d)(x)Z^G(x)_{K_f^G}&=\omega_f(g_f')(\phi_f\otimes\varphi_+^d)(x)Z^G(x)_{K_f^G}\\
\omega_f(m(a)_fg_f')(\phi_f\otimes\varphi_+^d)(x)Z^G(x)_{K_f^G}&=\omega_f(g_f')(\phi_f\otimes\varphi_+^d)(x)Z^G(x)_{K_f^G}\end{align*}
hold for any $u\in \Her_r(F)$ and $a\in\GL_r(F)$.
  By using the modularity for $w_1$ in the $r=1$ case, we can prove the modularity for $w_{r,r-1}$ when $r>1$ in the same way as in subsection \ref{subsection:w_1} and we already know the $w_1$-modularity.
  For the step reducing the problem to the $r=1$ case by induction, see the proof of Theorem \ref{unitarymodular} in subsection \ref{subsection:w_1}.
\end{proof}
  This shows the modularity of special cycles on a unitary Shimura variety for $e=1$ (Theorem \ref{MainCor1}) and gives another proof of Liu's one.

\section{general $e$ case}
\subsection{Weil representaions}
Let $\psi\colon E\backslash\A_E\to \C^{\times}$ be the composite of the trace map  $E\backslash\A_E\to\Q\backslash\A$ and the usual additive character
\begin{align*}
\Q\backslash\A&\to\C^{\times}\\
(x_v)_v&\mapsto\exp(2\pi\sqrt{-1}(x_{\infty}-\sum_{v<\infty}\overline{x_v})),
\end{align*}
where $\overline{x_v}$ is the class of $x_v$ in $\Q_p/\Z_p$.

Let $(W,(\ ,\ ))$ be a Hermitian space of dimension $2r$ over $E$ whose sign is $(r,r)$ so that $\U(W)=\U(r,r)$.
Then we get a symplectic vector space $\W\defeq\Res_{E/F}(V_E\otimes_E W)$ with the skew-symmetric form $\Tr_{E/F}(\langle\ ,\ \rangle\otimes(\ ,\ ))$.
Let $\Sp(\W)$ be the symplectic group and $\Mp(\W)$ be its metaplectic $\C^{\times}$ covering group.
Then we get the Weil representaion $\omega_f$ and $\omega_{\A}$, the action of  $\Mp(\W)(\A_f)$ to $\S(V(\A_{F,f})^r)$ and $\Mp(\W)(\A)$ to $\S(V(\A_F)^r)$.

Now we state the second solution to Conjecture \ref{Conjecture}.
\begin{thm}
  \label{unitarymodular}
  $Z^G_{\phi_f}(\tau)$ is a Hermitian modular form of weight $n+1$ and genus $r$ under the Beilinson-Bloch conjecture with respect to orthogonal Shimura varieties for $m=e$.
\end{thm}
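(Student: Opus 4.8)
The plan is to mirror the proof of Corollary~\ref{cor:e=1,modularity}, substituting for its one $e=1$-specific ingredient---Bruinier's regularized theta lift---the modularity of the \emph{orthogonal} generating series in the Chow group, which for general $e$ is available only under the Beilinson-Bloch conjecture. By \cite[Theorem 3.5]{Liu} the group $\U(r,r)$ is generated by the Siegel parabolic $P_r=M_r(F)N_r(F)$ and the Weyl element $w_{r,r-1}$; hence, by Definition~\ref{def:modular}, it suffices to prove that $\ell(Z^G_{\phi_f}(\tau))$ is invariant under these two generators for every $\C$-linear functional $\ell$ for which it converges absolutely.

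The invariance under the parabolic $P_r$ is unconditional and uniform in $r$. As recorded in the proof of Corollary~\ref{cor:e=1,modularity}, one verifies term-wise the two identities
\begin{align*}
\omega_f(n(u)_f g_f')(\phi_f\otimes\varphi_+^d)(x)Z^G(x)_{K_f^G}&=\omega_f(g_f')(\phi_f\otimes\varphi_+^d)(x)Z^G(x)_{K_f^G},\\
\omega_f(m(a)_f g_f')(\phi_f\otimes\varphi_+^d)(x)Z^G(x)_{K_f^G}&=\omega_f(g_f')(\phi_f\otimes\varphi_+^d)(x)Z^G(x)_{K_f^G}
\end{align*}
for all $n(u)\in N_r(F)$ and $m(a)\in M_r(F)$, coming from the action of the Weil representation $\omega_f$ on the finite part together with the transformation law of the cycles $Z^G(x)_{K_f^G}$. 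This is carried out in subsection~\ref{subsection:parabolic} and uses nothing about the Beilinson-Bloch conjecture.

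For the Weyl element I would argue by induction on $r$, as in \cite{YZZ} and in subsection~\ref{subsection:w_1}. Since $w_{r,r-1}$ moves only the last pair of coordinates, its action is governed by a ``corner'' copy of $w_1=w_{1,0}$, and passing to Fourier-Jacobi coefficients lets one deduce the $w_{r,r-1}$-invariance in genus $r$ from the modularity already established in genus $r-1$ (the induction hypothesis) together with the $P_r$-invariance above. Peeling off one variable at a time reduces the entire statement to the $w_1$-invariance of $Z^G_{\phi_f}(\tau)$ in the case $r=1$, i.e.\ to the modularity of the generating series of special \emph{divisors}; this is exactly why, as in Corollary~\ref{restate}, only the orthogonal modularity for $r=1$ is ultimately required.

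The base case is where the Beilinson-Bloch conjecture enters and is the main obstacle. Using the embedding $\iota\colon M_{K_f^G}\hookrightarrow N_{K_f^H}$ of~(\ref{embedding}) and the analogue of \cite[Corollary 3.4]{Liu}, one has the pullback identity $\iota^{\star}Z^H_{\phi_f}(\tau)=Z^G_{\phi_f}(\tau)$ in $\CH^{e}(M_{K_f^G})_{\C}$. The orthogonal series $Z^H_{\phi_f}(\tau)$ is modular in the Chow group by \cite{Remarks} and \cite{Yota}, which first prove modularity in cohomology via Kudla-Millson \cite{KM} and then descend to $\CH^{e}(N_{K_f^H})$ through the injectivity of the cycle map $cl^{e}_{\Q}$---precisely the consequence of the Beilinson-Bloch conjecture for $m=e$, together with the vanishing of the relevant odd cohomology. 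Since pullback is $\C$-linear, for any $\ell$ with $\ell(Z^G_{\phi_f})$ absolutely convergent the composite $\ell\circ\iota^{\star}$ is a functional on the orthogonal Chow group, whence $\ell(Z^G_{\phi_f})=(\ell\circ\iota^{\star})(Z^H_{\phi_f})$ is modular; this gives the $\SU(1,1)$-modularity and in particular the $w_1$-invariance, which combined with the $P_1$-invariance yields full $\U(1,1)$-modularity in genus one and closes the induction. I expect the genuinely delicate points to be (i) justifying the pullback identity for a general totally real $F$ and $e>1$, and (ii) the unavoidable detour through $N_{K_f^H}$: as noted in Remark~\ref{unitaryBB}, the Hodge numbers of $M_{K_f^G}$ itself do not vanish, so the cohomology-to-Chow argument cannot be run directly on the unitary side, and the Beilinson-Bloch hypothesis must be imposed on the orthogonal Shimura variety.
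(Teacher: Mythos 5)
Your proposal follows essentially the same route as the paper's proof: reduction to the generators $P_r$ and $w_{r,r-1}$ of $\U(r,r)$ via \cite[Theorem 3.5]{Liu}, unconditional term-wise $P_r$-invariance as in subsection~\ref{subsection:parabolic}, induction on $r$ via partial Fourier transform and Poisson summation for $w_{r,r-1}$, and a base case $r=1$ handled by the pullback identity $\iota^{\star}Z^H_{\phi_f}=Z^G_{\phi_f}$ of \cite[Corollary 3.4]{Liu} together with the orthogonal modularity of \cite{Remarks} and \cite{Yota} under the Beilinson-Bloch conjecture. Your observations that only the $r=1$ orthogonal case is ultimately needed and that the cohomology-to-Chow descent cannot be run on the unitary side match Corollary~\ref{restate} and Remark~\ref{unitaryBB}, respectively.
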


We reduce Theorem \ref{unitarymodular} to the orthogonal case, so have to assume the Beilinson-Bloch conjecture with respect to orthogonal Shimura varieties.
The strategy is as follows.
For the general $e$ case, we can prove the modularity for $P_r$ for any $r$ by direct calculation.
We can also show the modularity for $w_{r,r-1}$ when $r>1$, assuming the modularity for $w_1=w_{1,0}$ in the $r=1$ case.
So the problem is the modularity for $w_1$ for $r=1$ and general $e$.
We treat this problem by embedding unitary Shimura varieties into orthogonal varieties, studied in \cite{Hofmann}.
In orthogonal cases, the modularity of generating series are proved by \cite{Remarks} or \cite{Yota} under the Beilinson-Bloch conjecture.
We remark that when $e=1$, the modularity for $w_1$ is solved by Corollary \ref{cor:e=1,modularity}, followed by the modularity for $\SU(1,1)$ using the regularized theta lifts.
For the precise statement of the Beilinson-Bloch conjecture, see \cite[Subsection 1.2]{Yota}.

By \cite{YZZ}, we get the following expression for the generating series for the unitary group $G$.
\[Z^G_{\phi_f}(\tau)=\sum_{\substack{x\in K_f^G\backslash \widehat{V}_E^{r-1} \\ \mathrm{admissible}}}\sum_{y_2\in Fx}\sum_{\substack{y_1\in K_{f,x}^G\backslash x^{\perp} \\ \mathrm{admissible}}}\phi_f(x,y_i+y_2)Z^G(y_1)_{K_{f,x}}q^{T(x,y_1+y_2)}\]
where $K_{f,x}^G$ is the stabilizer of $x$, $\widehat{V}_E\defeq V_E\otimes \mathbb{A}_{f}$ and 
\[q^T(x)\defeq \exp(2\pi\sqrt{-1}\sum_{i=1}^{d}\Tr\tau_i \sigma_i T(x))\]
for $\tau=(\tau_1,\dots,\tau_d)\in(\H_r)^d$ and the moment matrix $T(x)=((x_i,x_j)/2)_{i,j}$.
Here for the notion  ``admissible" and $Z^G(x)_{K_f}$, see \cite[Lemma 3.1]{Liu}, \cite[Lemma 2.1]{Yota} or \cite[Lemma 2.1]{YZZ}.
Let $\varphi_+(x)=\exp(-\pi \Tr T(x))$ be the Gaussian.
Let $Z_{\phi_f}(g')$ be a function on $\U(r,r)(\A_F)$ defined by 
\begin{eqnarray*}
Z^G_{\phi_f}(g')
&\defeq&\sum_{x\in G(\Q)\backslash V_E^r}\sum_{g\in G_x(\A_f)\backslash G(\A_f)/K_f^G}\omega_{\A}(g')(\phi_f\otimes\varphi_+^d)(g^{-1}x)Z^G(x,g)_{K_f^G} \\
&=&\sum_{\substack{x\in K_f^G\backslash \widehat{V}_E^{r-1} \\ \mathrm{admissible}}}\sum_{y_2\in Fx}\sum_{\substack{y_1\in K_{f,x}^G\backslash x^{\perp} \\ \mathrm{admissible}}}\omega_{\A}(g')(\phi_f\otimes\varphi_+^d)(x,y_1+y_2)Z^G(y_1)_{K_{f,x}^G}.
\end{eqnarray*}

\begin{rem}
  The modularity of the generating series $Z_{\phi_f}(\tau)$ is equivalent to the left $\U(r,r)(F)$-invariance of the function $Z_{\phi_f}(g')$ on $\U(r,r)(\A)$.
\end{rem}

So in the following, we show
 the left  $\U(r,r)$-invariance of  $Z_{\phi_f}(g')$.
 First, we show the $P_r$-invariance  of  $Z^G_{\phi_f}(g')$ for any $r$.
Second, for $r>1$, we show the $w_{r,r-1}$-invariance of $Z_{\phi_f}(g')$, assuming the $w_1$-invariance for the  $r=1$ case.
Finally, we show that $Z^G_{\phi_f}(g')$ is $w_1$-invariant for the $r=1$ case.

\subsection{Invariance under the parabolic subgroup $P_r$}
\label{subsection:parabolic}
The elements $m(a)$ and $n(u)$ generate the parabolic subgroup $P_r(F)\subset \U(r,r)(F)$.

%For $g'\in\U(r,r)(\A_F)$, its infinity component in $\U(r,r)(F_{\infty})\cong\prod_{i=1}^d\U(r,r)(\R)$ is denoted by $g_{\infty}'=(g_{\infty,1}',\dots,g_{\infty,d}')$.

By the same way  \cite[Theorem 3.5 (1)]{Liu} or \cite[Section 4.1]{Yota}, we can show the  following equations with respect to $n(u)$ and $m(a)$:
\begin{align*}
    \omega_f(n(u)_fg_f')(\phi_f\otimes\varphi_+^d)(x)Z^G(x)_{K_f^G}&=\omega_f(g_f')(\phi_f\otimes\varphi_+^d)(x)Z^G(x)_{K_f^G}\\
    \omega_f(m(a)_fg_f')(\phi_f\otimes\varphi_+^d)(x)Z^G(x)_{K_f^G}&=\omega_f(g_f')(\phi_f\otimes\varphi_+^d)(xa)Z^G(x)_{K_f^G}
\end{align*}
for any $u\in \Her_r(F)$ and $a\in\GL_r(F)$.
First equation means the $n(u)$-invariance of $Z_{\phi_f}(g')$.
We shall prove that $Z_{\phi_f}(g')$ is $m(a)$-invariant as folows.
We have $U(x)=U(xa)$, so $Z^G_{\phi_f}(x)=Z^G_{\phi_f}(xa)$.
Therefore, combining the above calculation and the fact $Z^G_{\phi_f}(x)=Z^G_{\phi_f}(xa)$, we conclude that
\begin{eqnarray*}
Z^G_{\phi_f}(\omega_f(m(a))g')
&=& \sum_{\substack{x\in K_f^G\backslash \widehat{V}_E^r \\ \mathrm{admissible}}}\omega_f(g_f')(\phi_f\otimes\varphi_+^d)(xa)Z^G(xa)_{K_f^G} \\
&=& \sum_{\substack{x\in K_f^G\backslash \widehat{V}_E^r \\ \mathrm{admissible}}}\omega_f(g_f')(\phi_f\otimes\varphi_+^d)(x)Z^G(x)_{K_f^G} \\
&=& Z^G_{\phi_f}(g').
\end{eqnarray*}
This shows $Z_{\phi_f}(g')$ is invariant under the action of the  parabolic subgroup $P_r$.

\begin{comment}
$n(u)$ acts as follows:
\[\omega_f(n(u)_f)\phi_f(x)=\psi_f(\Tr(u_fT(x)))\phi_f(x).\]
Thus, we have
\begin{eqnarray*}
& &  \omega_f(n(u)_fg_f')(\phi_f)(x)Z(x)_{K_f}\prod_{i=1}^{d}W_{T(x)}(n(u)_{\infty,i}g_{\infty,i}') \\ &=&\psi_f(\Tr(u_fT(x_f)))\omega_f(g_f')\phi_f(x)Z(x)_{K_f}\psi_{\infty}(\sum_{i=1}^{d}\Tr(u_{i,\infty}T(x)_{\infty,i}))\prod_{i=1}^{d}W_{T(x)}(g_{\infty,i}') \\
&=&\psi(\Tr(uT(x)))\omega_f(g_f')\phi_f(x)Z(x)_{K_f}\prod_{i=1}^{d}W_{T(x)}(g_{\infty,i}') \\
&=&\omega_f(g_f')(\phi_f)(x)Z(x)_{K_f}\prod_{i=1}^{d}W_{T(x)}(g_{\infty,i}').
\end{eqnarray*}
\end{comment}

\subsection{Invariance under $w_{r,r-1}$ for $r>1$}
\label{subsection:w_1}
%Here, $W_{T(x_1+x_2,y)}(g_{\infty}')=\omega_{\infty}(g_{\infty}')(\varphi)$
Now, assuming $r=1$ case, we have
\begin{eqnarray*}
Z^G_{\phi_f}(w_{r,r-1}g')
&=&\sum_{\substack{x\in K_f^G\backslash \widehat{V}_E^{r-1} \\ \mathrm{admissible}}}\sum_{y_2\in Ex}\sum_{\substack{y_1\in K_{f,x}^G\backslash x^{\perp} \\ \mathrm{admissible}}}\omega_{\A}(g')(\phi_f\otimes\varphi_+^d)^{y_2}(x,y_1+y_2)Z^G(y_1)_{K_{f,x}^G}
\end{eqnarray*}
where $\phi^y(x,y)$ is the partial Fourier transformation with respect to the second coordinate.
Here we use the fact that \[\omega_{\A}(w_1)(\phi_f\otimes\varphi_+^d)(x,y)=(\phi_f\otimes\varphi_+^d)^{y}(x,y).\]
By the Poisson summation formula, this equals to
\[\sum_{\substack{x\in K_f^G\backslash \widehat{V}_E^{r-1} \\ \mathrm{admissible}}}\sum_{y_2\in Ey}\sum_{\substack{y_1\in K_{f,x}^G\backslash x^{\perp} \\ \mathrm{admissible}}}\omega_{\A}(g')(\phi_f\otimes\varphi_+^d)(x,y_1+y_2)Z^G(y_1)_{K_{f,x}^G},\]
which coincides with the definition of $Z^G_{\phi_f}(g')$.
Therefore, we get
\[Z^G_{\phi_f}(w_{r,r-1}g')=Z^G_{\phi_f}(g').\]
This shows that the function $Z_{\phi_f}(g')$ is invariant under the action of the element $w_{r,r-1}$.
\subsection{Invariance under $w_1$ for $r=1$}
\label{subsection:w_1,r=1}
We use Liu's proof \cite[Theorem 3.5]{Liu}.
Now $\U(1)\times\U(1)$ is a maximal compact subgroup of $\U(1,1)$ and $\SL_2(\A_{F,f})(\U(1)\times\U(1))(\A_{F,f})=\U(1,1)(\A_{F,f})$.
So we reduce the problem  to proving that $Z^G_{\phi_f}(w_1g')=Z^G_{\phi_f}(g')$ for all $g'\in\SL_2(\A_F)$.
By \cite[Corollary 3.4]{Liu} and the proof of \cite[Lemma 3.6]{Liu},
it suffices to prove $Z^H_{\phi_f}(w_1g')=Z^H_{\phi_f}(g')$.
However, this follows by \cite{Yota} under the Beilinson-Bloch conjecture with respect to orthogonal Shimura varieties.
This finishes the proof of Theorem \ref{unitarymodular}.

\subsection*{Acknowledgements}
The author would like to thank to his advisor, Tetsushi Ito, for useful discussions and warm encouragement.
I would also like to thank Jiacheng Xia for letting me know his paper \cite{Xia} and suggesting stating Corollary \ref{restate} explicitly.

\end{document}